\newtheorem{thm}{Theorem}
\newtheorem{defn}[thm]{\textbf{Definition}}
\newtheorem{example}{Example}[section]
\newtheorem{examples}{Examples}[section]
\newtheorem{cor}[thm]{Corollary}
\newtheorem{lem}[thm]{Lemma}
\newtheorem{prop}[thm]{Proposition}
\theoremstyle{definition}
\theoremstyle{definition}
\theoremstyle{remark}
\newtheorem{rem}[thm]{Remark}
\renewcommand{\k}{\Bbbk}
\renewcommand{\d}{\Delta}
\newcommand{\de}{\Delta}
\newcommand{\al}{\alpha}
\newcommand{\be}{\beta}
\newcommand{\ga}{\gamma}
\newcommand{\ot}{\otimes}
\newcommand{\ma}{\mathcal}
\newcommand{\rt}{\rightarrow}
\title{Nearly Frobenius theory and semisimplicity of bimodules}
\author{Dalia Artenstein, Ana Gonz\'alez, Gustavo Mata.
}
\date{\today}
\begin{document}
	\maketitle

	\begin{abstract}
		In the first part of this article we prove that one of the conditions required in the original definition of nearly Frobenius algebra, the coassociativity, is redundant. Also, we determine the Frobenius dimension of the product and tensor product of two nearly Frobenius algebras from the Frobenius dimension of each of them.  We apply these results to semisimple algebras. 
		
		In the second part we introduce the notion of normalized nearly Frobenius algebra. We prove a series of equivalences: the concept of normalized nearly Frobenius algebra is equivalent to the concept of separable algebra, equivalent to the fact that the algebra is projective as a bimodule on itself and, finally, equivalent to the category of bimodules is semisimple. Also, we relate these concepts with the property of semisimplicity of the category of modules over the algebra. 
			
	\end{abstract}
	
	Keywords: nearly Frobenius algebras, separable algebra, semisimple bimodule category, normalized coproduct.\\
	
	MSC: 16W99
	
	\section{Introduction}
\label{intro}

The concept of nearly Frobenius algebra is motivated by the result proved in \cite{CG04}, which states that: \emph{the homology of the free loop space $H^*(LM)$ has the structure of a Frobenius algebra without counit}. These objects were studied in \cite{GLSU} and their algebraic properties were developed in \cite{AGL15}, in particular   the possible nearly Frobenius structures in gentle algebras were described.

In the framework of differential graded algebras, Abbaspour considers  in \cite{A}  nearly Frobenius algebras that he calls open Frobenius algebras. He proves that if $A$ is a symmetric open Frobenius algebra of degree $m$, then $HH_{\ast}(A,A)[m]$ is an open Frobenius algebra, where the product at the chain level is given by
$$ a_0\bigl[a_1,\dots, a_n\bigr] \circ b_0\bigl[b_1,\dots, b_m\bigr] =  \left\{\begin{array}{lc}
0& \mbox{if}\; n>0\\
a_0''a_0'b_0\bigl[b_1,\dots, b_m\bigr]
\end{array}\right.
$$ 
and the coproduct is given by
$$\de\left(a_0\bigl[a_1,\dots, a_n\bigr]\right)=\sum_{(a_0),0\leq i\leq n}
\left(a_0'[a_1,\dots, a_{i-1}, a_i]\right)\otimes\left(a_0''[a_{i+1},\dots, a_n]\right).$$

In this work we prove that one of the conditions required in the original definition of nearly Frobenius algebra, the coassociativity, is redundant.  Also, we determine the Frobenius dimension of the product and tensor product of two nearly Frobenius algebras from the Frobenius dimension of each of them.  This applies to the definition of Frobenius algebras too. 

In the second part we introduce the notion of normalized nearly Frobenius algebras, we prove that cartesian and tensor product of normalized nearly Frobenius algebras is also a normalized nearly Frobenius algebra. Later, we prove that the concept of normalized  nearly Frobenius algebra is equivalent to the concept of separable algebra and equivalent in turn to algebra having Hochschild cohomological dimension zero. We give some applications of these results, for example that the matrix algebra is a normalized nearly Frobenius algebra, therefore is separable.
If we consider the category of bimodules over a nearly Frobenius algebra, we prove that the normalized condition over the nearly Frobenius algebra is equivalent that the bimodule category is semisimple. \\  
 The work finish relating the concepts described above with the semisimplicity property of the module category on the nearly Frobenius algebra. 
Although the conclusions presented in the applications are already known the techniques to prove them are originals.


\section{Nearly Frobenius algebras}
\label{sec:nearlyFrob}

\hspace{0.5cm} In one of the definitions of Frobenius algebras it is required that the algebra $A$ admits a coalgebra structure $(A, \Delta,\varepsilon)$ where the coproduct $\Delta$ is a morphism of $A$-bimodules. In the next result we prove that the coassociativity condition is redundant.

\begin{prop}
Let $A$ be a Frobenius algebra, then the coassociativity condition is a consequence of the $A$-bimodule morphism condition of $\Delta$, and the unit axiom.
\end{prop}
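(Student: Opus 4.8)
The plan is to use the bimodule condition to compress all the information of $\Delta$ into the single element $\Delta(1)\in A\otimes A$, and then to compute both iterated coproducts directly. Give $A\otimes A$ its usual $A$-bimodule structure, with the left action on the first tensorand and the right action on the second, so that the bimodule condition on $\Delta$ reads $\Delta(ambn)=\dots$ in the obvious way; concretely it amounts to $\Delta(ab)=(a\otimes 1)\Delta(b)$ and $\Delta(ab)=\Delta(a)(1\otimes b)$ for all $a,b\in A$. Feeding $1$ into one slot and using the unit axiom of $A$ (that $1$ acts as the identity), one gets
\[
\Delta(a)=(a\otimes 1)\,\Delta(1)\qquad\text{and}\qquad \Delta(a)=\Delta(1)\,(1\otimes a),
\]
for every $a\in A$. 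Writing $\Delta(1)=\sum 1_{(1)}\otimes 1_{(2)}$ and $\Delta(a)=\sum a_{(1)}\otimes a_{(2)}$, the first identity says $\sum a_{(1)}\otimes a_{(2)}=\sum a1_{(1)}\otimes 1_{(2)}$, and I will also keep the right-module identity in the handy rewritten form $\Delta(ab)=\Delta(a)(1\otimes b)$.

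Next I would evaluate the two triple coproducts on an arbitrary $a\in A$. Starting from $\Delta(a)=\sum a1_{(1)}\otimes 1_{(2)}$ and applying $\Delta\otimes\mathrm{id}$ gives immediately
\[
(\Delta\otimes\mathrm{id})\Delta(a)=\sum \Delta\bigl(a1_{(1)}\bigr)\otimes 1_{(2)}.
\]
For the other one, apply $\mathrm{id}\otimes\Delta$ to $\Delta(a)=\sum a_{(1)}\otimes a_{(2)}$ and expand the inner $\Delta(a_{(2)})=(a_{(2)}\otimes 1)\Delta(1)$, obtaining $\sum a_{(1)}\otimes a_{(2)}1_{(1)}\otimes 1_{(2)}$; for a fixed summand of $\Delta(1)$ the expression $\sum a_{(1)}\otimes a_{(2)}1_{(1)}$ is precisely $\Delta(a)\bigl(1\otimes 1_{(1)}\bigr)=\Delta\bigl(a1_{(1)}\bigr)$ by the right-module identity, so that
\[
(\mathrm{id}\otimes\Delta)\Delta(a)=\sum \Delta\bigl(a1_{(1)}\bigr)\otimes 1_{(2)}.
\]
The two results agree, and since $a$ was arbitrary this is exactly coassociativity $(\Delta\otimes\mathrm{id})\Delta=(\mathrm{id}\otimes\Delta)\Delta$.

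I expect the only delicate point to be bookkeeping: keeping track of which tensor leg carries the left versus the right $A$-action, and being careful that the collapse $\sum a_{(1)}\otimes a_{(2)}1_{(1)}=\Delta(a1_{(1)})$ is performed leg-by-leg for a fixed term of $\Delta(1)$, not by mixing the two Sweedler sums. Beyond that there is no real obstacle. It is worth noting that the counit $\varepsilon$ and any nondegeneracy hypothesis are never used: the argument needs only that $A$ is unital and that $\Delta$ is an $A$-bimodule morphism, which is what the statement asserts, and the same computation will apply to the coproduct of a nearly Frobenius algebra in the next section.
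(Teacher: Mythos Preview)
Your argument is correct. You use exactly the same two ingredients as the paper---the $A$-bimodule property of $\Delta$ and the unit axiom---but you organize them differently. The paper displays a single commutative diagram whose outer square is coassociativity and whose inner cells are instances of the Frobenius identities $(1\otimes m)(\Delta\otimes 1)=\Delta\, m$ and $(m\otimes 1)(1\otimes\Delta)=\Delta\, m$, together with the unit identity $m(u\otimes 1)=\mathrm{id}$; the chase passes through $A^{\otimes 4}$ via the factorization of $\Delta\otimes\Delta$. You instead compress $\Delta$ into the element $\Delta(1)$, derive $\Delta(a)=(a\otimes 1)\Delta(1)=\Delta(1)(1\otimes a)$, and then show directly that both $(\Delta\otimes\mathrm{id})\Delta(a)$ and $(\mathrm{id}\otimes\Delta)\Delta(a)$ equal $\sum\Delta(a\,1_{(1)})\otimes 1_{(2)}$. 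Your route is a bit more economical (it never needs a fourfold tensor product) and makes the role of $\Delta(1)$ explicit, which is consonant with how the paper later identifies $\mathcal{E}_A$ with $(A\otimes A)^A$; the paper's diagrammatic version has the advantage of being element-free and visibly functorial. Your caution about not mixing the two Sweedler sums is well placed and handled correctly: fixing a concrete decomposition $\Delta(1)=\sum_i x_i\otimes y_i$ makes the step $\sum a_{(1)}\otimes a_{(2)}x_i=\Delta(a)(1\otimes x_i)=\Delta(ax_i)$ unambiguous.
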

\begin{proof}
	In the next diagram we illustrate this affirmation.
	$$\xymatrix{
		A\ar@{=}[d]\ar[rrr]^{\d} & & & A\ot A\ar[ddd]^{1\ot \d}\\
		\k\ot A \ar[r]^{u\ot 1}& A\ot A\ar[r]^{\d\ot 1}\ar[lu]_m\ar[ld]_m\ar[d]_{1\ot\d} & A\ot A\ot A\ar[d]^{1\ot 1\ot \d}\ar[ru]^{1\ot m} & \\
		A\ar@{=}[u]\ar[d]_{\d} & A\ot A\ot A\ar[r]_(.4){\d\ot 1\ot 1}\ar[ld]_{m\ot 1} & A\ot A\ot A\ot A \ar[rd]^{1\ot m\ot 1}& \\
		A\ot A\ar[rrr]_{\d\ot 1} & & & A\ot A\ot A}$$
	All the internal diagrams commute as a consequence of the $A$-bimodule condition, the unit axiom and the natural decomposition of the morphism $\d\ot\d$; then the external diagram commutes too.
\end{proof}

The previous result allows us to give the next alternative definition of nearly Frobenius algebras.

\begin{defn}
	An algebra $A$ is a \emph{nearly Frobenius algebra} if it admits a linear map $\Delta:A\rt A\ot A$ such that
	$$\xymatrix{A\otimes A\ar[r]^{m}\ar[d]_{\Delta\otimes 1}& A\ar[d]^{\Delta}\\
		A\otimes A\otimes A\ar[r]_{1\otimes m}&A\otimes A
	},\quad \xymatrix{A\otimes A\ar[r]^{m}\ar[d]_{1\otimes\Delta}& A\ar[d]^{\Delta}\\
		A\otimes A\otimes A\ar[r]_{m\otimes 1}&A\otimes A
	}$$ commute.
\end{defn}

\begin{defn}
	The \emph{Frobenius space} associated to an algebra $A$ is the vector space of all the possible coproducts $\d$ that make it into a nearly Frobenius algebra ($\mathcal{E}$), see \cite{AGL15}. Its dimension over $\k$ is called the \emph{Frobenius dimension} of $A$, that is,
	$$\operatorname{Frobdim}A = \operatorname{dim}_\k\mathcal{E}.$$
\end{defn}

\begin{defn}
	Let $(A,\Delta_A)$ and $(B,\Delta_B)$ be two nearly Frobenius algebras. A homomorphism $f:A\rightarrow B$ is a \emph{nearly Frobenius homomorphism} if it is a morphism of algebras and the following diagram commutes.
	$$\xymatrix{A\ar[r]^{f}\ar[d]_{\Delta_A}& B\ar[d]^{\Delta_B}\\
		A\otimes A\ar[r]_{f\otimes f}&B\otimes B
	}$$
	If $f$ is bijective then $f$ is said to be an \emph{isomorphism} between $A$ and $B$.\\
	
	Notation: \emph{nFrob} is the category of nearly Frobenius algebras.
\end{defn}

\begin{thm}\label{teo1}
	Let $(A,\Delta_A)$ be a nearly Frobenius algebra, $B$ an algebra and $f:A\rightarrow B$ an isomorphism of algebras. Then $B$ admits a nearly Frobenius structure defined as
	$$\Delta_B=(f\otimes f)\circ\Delta_A\circ f^{-1}.$$
In particular $\operatorname{Frobdim}A=\operatorname{Frobdim}B$.

\end{thm}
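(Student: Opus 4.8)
The plan is to check that $\Delta_B=(f\otimes f)\circ\Delta_A\circ f^{-1}$ fulfils the two square diagrams defining a nearly Frobenius algebra, and then to exhibit an explicit $\k$-linear isomorphism between the Frobenius spaces of $A$ and of $B$.

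First I would record the elementary identities coming from the hypothesis that $f$ is an isomorphism of algebras. Writing $m_A$ and $m_B$ for the multiplications, the morphism condition $f\circ m_A=m_B\circ(f\otimes f)$ is equivalent to $f^{-1}\circ m_B=m_A\circ(f^{-1}\otimes f^{-1})$, so that $f^{-1}\colon B\rightarrow A$ is again an algebra isomorphism; moreover $f\otimes f$ and $f\otimes f\otimes f$ are bijective with inverses $f^{-1}\otimes f^{-1}$ and $f^{-1}\otimes f^{-1}\otimes f^{-1}$.

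Next I would carry out the diagram chase for the first square, the one asserting $\Delta\circ m=(1\otimes m)\circ(\Delta\otimes 1)$. Starting from $\Delta_B\circ m_B=(f\otimes f)\circ\Delta_A\circ f^{-1}\circ m_B$ and substituting $f^{-1}\circ m_B=m_A\circ(f^{-1}\otimes f^{-1})$, one uses the nearly Frobenius identity for $A$, namely $\Delta_A\circ m_A=(1\otimes m_A)\circ(\Delta_A\otimes 1)$, to rewrite the expression as $(f\otimes f)\circ(1\otimes m_A)\circ(\Delta_A\otimes 1)\circ(f^{-1}\otimes f^{-1})$. Pushing the copies of $f$ through the tensor factors — using $(f\otimes f)\circ(1\otimes m_A)=(1\otimes m_B)\circ(f\otimes f\otimes f)$ together with $(f\otimes f\otimes f)\circ\bigl((\Delta_A\circ f^{-1})\otimes f^{-1}\bigr)=\Delta_B\otimes 1$ — collapses this to $(1\otimes m_B)\circ(\Delta_B\otimes 1)$, which is exactly what is required. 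The second square is handled by the mirror-image computation (using $\Delta_A\circ m_A=(m_A\otimes 1)\circ(1\otimes\Delta_A)$), so $(B,\Delta_B)$ is a nearly Frobenius algebra.

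Finally, for the statement about Frobenius dimension, I would consider the map $\Phi\colon\mathcal{E}\to\mathcal{E}'$ given by $\Phi(\Delta)=(f\otimes f)\circ\Delta\circ f^{-1}$, where $\mathcal{E}$ and $\mathcal{E}'$ denote the Frobenius spaces of $A$ and of $B$. It is clearly $\k$-linear, and it is well defined by the part just proved; applying the very same construction to the algebra isomorphism $f^{-1}$ gives a $\k$-linear map $\mathcal{E}'\to\mathcal{E}$, $\Delta'\mapsto(f^{-1}\otimes f^{-1})\circ\Delta'\circ f$, which is a two-sided inverse of $\Phi$ by a direct computation. Hence $\Phi$ is an isomorphism of $\k$-vector spaces and $\operatorname{Frobdim}A=\dim_\k\mathcal{E}=\dim_\k\mathcal{E}'=\operatorname{Frobdim}B$. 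I do not expect any genuine obstacle: the argument is pure bookkeeping, and the only point needing a little care is to make sure that $\Delta_B$ actually lies in the Frobenius space $\mathcal{E}'$ of $B$ rather than being merely some linear map $B\to B\otimes B$, which is precisely what the diagram chase of the previous step guarantees; the bijectivity of $f$ is what makes the whole correspondence reversible.
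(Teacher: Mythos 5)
Your proof is correct and follows essentially the same route as the paper: the paper's cube-diagram chase is just the diagrammatic form of your equational computation transporting the nearly Frobenius identities along the algebra isomorphism $f$. You additionally make explicit the inverse correspondence $\Delta'\mapsto(f^{-1}\otimes f^{-1})\circ\Delta'\circ f$ giving $\operatorname{Frobdim}A=\operatorname{Frobdim}B$, a point the paper states without further argument.
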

\begin{proof}
	We need to check that $\Delta_B$ is a $B$-bimodule morphism. That is,
	$$\xymatrix{B\otimes B\ar[r]^{m}\ar[d]_{\Delta_B\otimes 1}& B\ar[d]^{\Delta_B}\\
		B\otimes B\otimes B\ar[r]_{1\otimes m}&B\otimes B
	},\quad \xymatrix{B\otimes B\ar[r]^{m}\ar[d]_{1\otimes\Delta_B}& B\ar[d]^{\Delta_B}\\
		B\otimes B\otimes B\ar[r]_{m\otimes 1}&B\otimes B
	}$$ commute.
	To prove this we only need to see that the dotted face of the next cube commutes.
	
	$$\xymatrix{
		&B\otimes A\ar@{->}'[d][dd]^{1\otimes\Delta_A}\ar[ld]_{1\otimes f}&A\otimes A\ar@{->}'[d][dd]^{1\otimes\Delta_A}\ar[rr]^m\ar[l]_{{f}\otimes 1}&&A\ar[dd]^{\Delta_A}\ar[dl]_f\\
		B\otimes B\ar@{-->}[dd]_{1\otimes\Delta_B}\ar@{-->}[rrr]^(.6){m}&&&B\ar@{-->}[dd]_(.6){\Delta_B}&\\
		&B\otimes A\otimes A\ar[dl]_{1\otimes f\otimes f}&A\otimes A\otimes A\ar@{->}'[r][rr]^{m\otimes 1}\ar[l]_{f\otimes 1\otimes 1}&&A\otimes A\ar[dl]^{f\otimes f}\\
		B\otimes B\otimes B\ar@{-->}[rrr]_{m\otimes 1}&&&B\otimes B &}
	$$
	Since $f$ is an isomorphism of algebras and $\Delta_A$ is a nearly Frobenius coproduct in $A$ all the other faces commute and then the dotted face commutes.
\end{proof}

\begin{rem}\label{r6}
Assume that $A_1$ and $A_2$ are $\Bbbk$-algebras. The product of the algebras $A_1$ and $A_2$ is the algebra $A = A_1 \times A_2$ with the addition and the
multiplication given by the formulas $(a_1, a_2) + (b_1, b_2) = (a_1 + b_1, a_2 + b_2)$ and $(a_1, a_2)(b_1, b_2) = (a_1b_1, a_2b_2)$, where $a_1$, $b_1\in A_1$ and $a_2$, $b_2\in A_2$. The
identity of $A$ is the element $1 = (1_{A_1}, 1_{A_2}) = e_1+e_2 \in A_1\times A_2$, where $e_1 = (1_{A_1}, 0)$ and $e_2 = (0, 1_{A_2})$. If $\bigl(A_1,\de_1\bigr)$ and $\bigl(A_2,\de_2\bigr)$ are nearly Frobenius algebras then $A$ admits a natural structure of Nearly Frobenius algebra. In the next paragraph we describe this structure.

First, we define $\de(e_1)=\sum(a_1,0)\otimes (a_2,0)$, where $\de_1(1_{A_1})=\sum a_1\otimes a_2$ and $\de(e_2)=\sum (0,b_1)\otimes (0,b_2)$, where $\de_2(1_{A_2})=\sum b_1\otimes b_2$. Then
$$\de(1)=\sum(a_1,0)\otimes (a_2,0)+\sum (0,b_1)\otimes (0,b_2)\in  	A\otimes A.$$
To prove that this defines a bimodule morphism it is necessary to guarantee  that $\de(1)$ satisfies that
$$(c\otimes 1)\de(1)=\de(1)(1\otimes c),\;\forall\; c\in A.$$  
Denote $c=(c_1,c_2)\in A$, then
$$\begin{array}{ccl}
(c\otimes 1)\de(1)&=&\displaystyle{(c_1,c_2)\otimes(1,1)\left[\sum(a_1,0)\otimes (a_2,0)+\sum (0,b_1)\otimes (0,b_2)\right]}\\
&=&\displaystyle{\sum\left((c_1,c_2)\otimes(1,1)\right)\left((a_1,0)\otimes (a_2,0)\right)+\sum \left((c_1,c_2)\otimes(1,1)\right)\left((0,b_1)\otimes (0,b_2)\right)}\\
 &=& \displaystyle{\sum (c_1a_1,0)\otimes (a_2,0)+\sum (0,c_2b_1)\otimes(0,b_2). }
\end{array}$$

On the other hand

$$\begin{array}{ccl}
\de(1)(1\otimes c)&=&\displaystyle{\left[\sum(a_1,0)\otimes (a_2,0)+\sum (0,b_1)\otimes (0,b_2)\right]\left((1,1)\otimes(c_1,c_2)\right)}\\
&=&\displaystyle{\sum\left((a_1,0)\otimes (a_2,0)\right)\left((1,1)\otimes(c_1,c_2)\right)+\sum \left((0,b_1)\otimes (0,b_2)\right)\left((1,1)\otimes(c_1,c_2)\right)}\\
&=& \displaystyle{\sum (a_1,0)\otimes (a_2c_1,0)+\sum (0,b_1)\otimes(0,b_2c_2). }
\end{array}$$

Remember that $\de_{A_1}$ and $\de_{A_2}$ are bimodule morphisms, then 
$$(c_1\otimes 1)\de_{A_1}(1_{A_1})=\sum c_1a_1\otimes a_2=\sum a_1\otimes a_2c_1=\de_{A_1}(1_{A_2})(1\otimes c_1)$$
and
$$(c_2\otimes 1)\de_{A_2}(1_{A_2})=\sum c_2b_1\otimes b_2=\sum b_1\otimes b_2c_2=\de_{A_2}(1_{A_2})(1\otimes c_2)$$

This proves that $(c\otimes 1)\de(1)=\de(1)(1\otimes c)$. Then $A$ is a nearly Frobenius algebra.
\end{rem}

\begin{rem}\label{r7}

Similarly, we can consider the tensor product $A\otimes B$ of the  $\Bbbk$-algebras $A$ and $B$. As before, we can define a nearly Frobenius coproduct on $A\otimes B$. In this case we take the transposition map $\tau: (A\otimes A)\otimes(B\otimes B)\rightarrow (A\otimes B)\otimes (A\otimes B)$ and the coproduct on $A$ and $B$ to define the coproduct on $A\otimes B$ as follows
$$\de:=\tau\circ \de_A\otimes\de_B :A\otimes B\rightarrow (A\otimes B)\otimes (A\otimes B).$$
Since all the maps are linear, the map $\de$ is linear too. We will test only one of the two necessary conditions to guarantee that it is bimodule morphism, the other one is analogous. 
$$\xymatrix @C=5pc{
a\otimes b\otimes c\otimes d\ar@{|->}[r]^{\tau^{-1}} \ar@{|->}[d]_{\de_A\otimes\de_B\otimes 1_{A\otimes B}}& a\otimes c\otimes b\otimes d\ar@{|->}[d]^{m_{A\otimes B}}\\
    \de_A(a)\otimes\de_B(b)\otimes c\otimes d\ar@{=}[d] & ac\otimes bd\ar@{|->}[d]^{\de_A\otimes \de_B}\\
    \sum a_1\otimes a_2\otimes\sum b_1\otimes b_2\otimes c\otimes d\ar@{|->}[d]_{\tau\otimes 1_{A\otimes B}}&   \de_A(ac)\otimes\de_B(bd)\ar@{=}[d]\\  
\sum\sum a_1\otimes b_1\otimes a_2\otimes b_2\otimes c\otimes d\ar@{|->}[d]_{1_{A\otimes B}\otimes\tau^{-1}}& \sum a_1\otimes a_2c\otimes\sum b_1\otimes b_2d\ar@{|->}[d]^{\tau}\\    
\sum\sum a_1\otimes b_1\otimes a_2\otimes c\otimes b_2\otimes d\ar@{|->}[r]_{1\otimes m_A\otimes m_B}&  \sum\sum a_1\otimes b_1\otimes a_2c\otimes b_2d
}$$ 

\end{rem}

\begin{prop}\label{P7}
	Consider $A$ and $B$ two $\Bbbk$-algebras, then the following isomorphisms of vector spaces hold:
	\begin{enumerate}
		\item $\mathcal E_{A\times B}\cong \mathcal E_A\times\mathcal E_B$. In particular $\operatorname{Frobdim}(A\times B)=\operatorname{Frobdim}(A)+\operatorname{Frobdim}(B)$.
		\item $\mathcal E_{A\otimes B}\cong \mathcal E_A\otimes\mathcal E_B$. Therefore  $\operatorname{Frobdim}(A\otimes B)=\operatorname{Frobdim}(A).\operatorname{Frobdim}(B)$.
	\end{enumerate}	
\end{prop}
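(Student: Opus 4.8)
The plan is to reduce both statements to a single concrete description of the Frobenius space. By the two bimodule squares in the definition of a nearly Frobenius algebra one has, for every $\Delta\in\mathcal{E}_A$, the identities $\Delta(a)=(a\otimes 1)\Delta(1)=\Delta(1)(1\otimes a)$; hence $\Delta\mapsto\Delta(1)$ is a linear injection of $\mathcal{E}_A$ into $A\otimes A$. A short verification (of the kind carried out in Remark~\ref{r6}) shows that its image is exactly the centralizer
$$Z(A):=\{\omega\in A\otimes A\ :\ (a\otimes 1)\omega=\omega(1\otimes a)\ \text{for all}\ a\in A\},$$
so that $\mathcal{E}_A\cong Z(A)$ naturally, and the two assertions become statements about these subspaces of $(A\times B)\otimes(A\times B)$ and of $(A\otimes B)\otimes(A\otimes B)$.

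For (1): let $e_1=(1_{A},0)$, $e_2=(0,1_{B})$ be the central idempotents of $A\times B$ and decompose $(A\times B)^{\otimes 2}=\bigoplus_{i,j\in\{1,2\}}(e_i\otimes e_j)\big((A\times B)^{\otimes 2}\big)$, with the $(1,1)$- and $(2,2)$-blocks equal to $A\otimes A$ and $B\otimes B$ and the $(1,2)$- and $(2,1)$-blocks equal to $A\otimes B$ and $B\otimes A$. Writing $\omega=\omega_{11}+\omega_{12}+\omega_{21}+\omega_{22}$, I would first impose the centralizer condition only at $c=e_1$: a direct computation gives $(e_1\otimes1)\omega=\omega_{11}+\omega_{12}$ and $\omega(1\otimes e_1)=\omega_{11}+\omega_{21}$, so $\omega_{12}=\omega_{21}$; since these lie in complementary summands, both vanish. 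With $\omega=\omega_{11}+\omega_{22}$, the condition at a general $c=(c_1,c_2)$ then splits along the direct sum into $(c_1\otimes 1)\omega_{11}=\omega_{11}(1\otimes c_1)$ and $(c_2\otimes 1)\omega_{22}=\omega_{22}(1\otimes c_2)$, i.e. $\omega_{11}\in Z(A)$ and $\omega_{22}\in Z(B)$. Hence $Z(A\times B)=Z(A)\oplus Z(B)$, which is the desired isomorphism and yields additivity of $\operatorname{Frobdim}$.

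For (2): use the transposition $\tau$ of Remark~\ref{r7} to identify $(A\otimes B)^{\otimes 2}\cong A^{\otimes 2}\otimes B^{\otimes 2}$; a direct check shows that under this identification the $(A\otimes B)$-bimodule structure becomes the ``diagonal'' one in which $a\in A$ acts on the $A^{\otimes 2}$-tensorand and $b\in B$ on the $B^{\otimes 2}$-tensorand, so $Z(A\otimes B)$ corresponds to $\{\Omega:(a\otimes b)\cdot\Omega=\Omega\cdot(a\otimes b)\ \text{for all}\ a\in A,\ b\in B\}$. The inclusion $Z(A)\otimes Z(B)\subseteq Z(A\otimes B)$ is immediate (it is precisely what Remark~\ref{r7} checks). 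For the reverse inclusion I would take $\Omega$ in the centralizer and write $\Omega=\sum_i\omega_i\otimes f_i$ with the $f_i\in B^{\otimes 2}$ linearly independent and $\omega_i\in A^{\otimes 2}$; evaluating the condition at $b=1_B$ and all $a\in A$ gives $\sum_i\big((a\otimes 1)\omega_i-\omega_i(1\otimes a)\big)\otimes f_i=0$, whence $\omega_i\in Z(A)$ for all $i$ and $\Omega\in Z(A)\otimes B^{\otimes 2}$. Expanding instead along a linearly independent family on the $A$-side and evaluating at $a=1_A$ gives symmetrically $\Omega\in A^{\otimes 2}\otimes Z(B)$; since $(U\otimes W')\cap(W\otimes U')=U\otimes U'$ for subspaces $U\subseteq W$ and $U'\subseteq W'$, we conclude $\Omega\in Z(A)\otimes Z(B)$. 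Thus $\mathcal{E}_{A\otimes B}\cong\mathcal{E}_A\otimes\mathcal{E}_B$ and the Frobenius dimension multiplies.

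The routine points — that every $\omega\in Z(A)$ really does define a nearly Frobenius coproduct via $\Delta(a)=(a\otimes1)\omega$, and that $\tau$ transports the bimodule structure as claimed — are essentially contained in the preceding remarks. The step I expect to require the most care is the reverse inclusion in (2): one wants no finiteness hypothesis on $A$ or $B$, and it is exactly the linear-independence/basis argument that makes the decoupling valid for infinite-dimensional algebras, hence also when the Frobenius dimension is infinite.
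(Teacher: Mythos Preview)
Your argument is correct. For part~(1) you and the paper follow essentially the same route---multiplying by the central idempotents $e_1,e_2$ to kill the off-diagonal blocks---with your block decomposition of $(A\times B)^{\otimes2}$ being a tidier packaging of the same computation. For part~(2) the approaches genuinely differ. The paper fixes bases $\{x_i\},\{y_j\}$ with $x_1=1_A$, $y_1=1_B$, expands $\Delta(1_A\otimes1_B)$ in coordinates, defines $\Delta_A,\Delta_B$ by restricting to the slices $j=l=1$ and $i=k=1$, and then asserts that $\Delta=(1\otimes\tau\otimes1)(\Delta_A\otimes\Delta_B)$; you instead pass immediately to the centralizer description $\mathcal E_C\cong Z(C)\subset C\otimes C$, decouple the $A$- and $B$-conditions by evaluating at $b=1_B$ and $a=1_A$ against linearly independent families, and finish with the subspace identity $(U\otimes W')\cap(W\otimes U')=U\otimes U'$. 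Your route is more robust: it needs no basis containing~$1$ (hence no implicit finite-dimensionality), and it actually supplies the reverse inclusion that the paper's coordinate sketch leaves unjustified. The paper's approach, once its details are filled in, has the minor advantage of producing explicit coordinate formulas for the inverse map.
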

\begin{proof}
	In Remark \ref{r6} we saw that there exist natural inclusions of $\mathcal E_A\times\mathcal E_B$ in $\mathcal E_{A\times B}$ and, in Remark \ref{r7}, of $\mathcal E_A\otimes\mathcal E_B$ in $\mathcal E_{A\otimes B}$.\\
	To finish the proof it is necessary to check that the maps are surjective. 
	\begin{enumerate}
		\item We note the unit of $A\times B$ as  $1=e_1+e_2$, where $e_1=\bigl(1_{A},0\bigr)$ and $e_2=\bigl(0,1_{B}\bigr).$ \\
		Let's take $\Delta\in \mathcal E_{A\times B}$ and express $\Delta(1)$ as follows:
		$$\de(1)=\sum_{i,j}(\eta_i,\xi_i)\otimes(\rho_j,\nu_j)$$
		with $\eta_i, \rho_j\in A$ and $\xi_i, \nu_j\in B$ for all $i,j$.
		Since $\Delta$ is a bimodule morphism we can prove that $\Delta(e_1)=\sum_{i,j}(\eta_i,0)\otimes(\rho_j,0)$ and, in a similar way, that $\Delta(e_2)=\sum_{i,j}(0,\xi_i)\otimes(0,\nu_j)$. Then, we conclude that the coproduct has the expression
		$$\Delta(1)=\Delta(e_1)+\Delta(e_2)=\sum_{i,j}(\eta_i,0)\otimes(\rho_j,0)+\sum_{i,j}(0,\xi_i)\otimes(0,\nu_j).$$
		This allows us to define $\Delta_A(1_A)=\sum_{i,j}\eta_i\otimes\rho_j$ and $\Delta(1_B)=\sum_{i,j}\xi_i\otimes\nu_j$. Using again that $\Delta$ is a bimodule morphism, we deduce that $\Delta_A$ and $\Delta_B$ are also bimodule morphisms, then $\bigl(A,\de_A\bigr)$ and $\bigl(B,\de_B\bigr)$ are nearly Frobenius algebras. In particular $\Delta=\iota\circ\bigl(\Delta_A+\Delta_B\bigr)$.
		\item Consider $\Delta\in \mathcal E_{A\otimes B}$ and $\{x_i\}$,  $\{y_j\}$ bases of $A$ and $B$ respectively, where $x_1=1_A$ and $y_1=1_B$. Then
		$$\Delta(1_A\otimes 1_B)=\sum_{i,j,k,l}a_ib_jc_kd_lx_i\otimes y_j\otimes x_k\otimes y_l,\quad\mbox{where}\; a_i, b_j, c_k,d_l\in\Bbbk.$$
		Using that $\Delta$ is bimodule morphism we have that
		$$\Delta(x\otimes 1_B)=\sum_{i,j,k,l}a_ib_jc_kd_lxx_i\otimes y_j\otimes x_k\otimes y_l=\sum_{i,j,k,l}a_ib_jc_kd_lx_i\otimes y_j\otimes x_kx\otimes y_l.$$
		As $y_j=y_l=1_B$ when $j=l=1$ we can define 
		$$\Delta_A(1_A)=\sum_{i,k}a_ic_kx_i\otimes x_k,$$
		analogously we can define
		$$\Delta_B(1_B)=\sum_{j,l}b_jd_ly_j\otimes y_l.$$
		Note that with these definitions the coproduct $\Delta$ is
		$$\Delta=(1\otimes\tau\otimes 1)\circ(\Delta_A\otimes\Delta_B).$$
	\end{enumerate}
\end{proof}

The next corollary is a consequence of Theorem \ref{teo1} and Proposition \ref{P7}.

\begin{cor}
	If $A$ is a semisimple algebra over an algebraically closed field $\Bbbk$, then it is possible to determine completely its Frobenius dimension.
\end{cor}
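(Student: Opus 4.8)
The plan is to reduce the statement, via the Wedderburn structure theorem together with Theorem~\ref{teo1} and Proposition~\ref{P7}, to the single computation of the Frobenius dimension of a full matrix algebra. Since $\Bbbk$ is algebraically closed and $A$ is semisimple, Artin--Wedderburn gives positive integers $n_1,\dots,n_r$ and an isomorphism of algebras $A\cong M_{n_1}(\Bbbk)\times\cdots\times M_{n_r}(\Bbbk)$. By Theorem~\ref{teo1} the Frobenius dimension depends only on the isomorphism class, and iterating part~(1) of Proposition~\ref{P7} yields
$$\operatorname{Frobdim}(A)=\sum_{i=1}^r\operatorname{Frobdim}\bigl(M_{n_i}(\Bbbk)\bigr),$$
so it suffices to compute $\operatorname{Frobdim}\bigl(M_n(\Bbbk)\bigr)$ for each $n$.

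For this I would first record the elementary observation, implicit in the alternative definition and already used in Remark~\ref{r6}, that $\Delta\mapsto\Delta(1)$ identifies $\mathcal E_A$ with the space of $\omega\in A\otimes A$ satisfying $(a\otimes 1)\omega=\omega(1\otimes a)$ for all $a\in A$ (the coproduct being recovered as $\Delta(a)=(a\otimes 1)\omega=\omega(1\otimes a)$); equivalently, $\mathcal E_A$ is the space of $A$-bimodule morphisms $A\to A\otimes A$. Now take $A=M_n(\Bbbk)$, identify $M_n(\Bbbk)\otimes M_n(\Bbbk)$ with $\operatorname{End}_\Bbbk(\Bbbk^n\otimes\Bbbk^n)$, write $\omega$ in the matrix-unit basis, and impose the balancing condition against $a=E_{st}$ for all $s,t$. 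A short index computation forces $\omega=\tau\circ(C\otimes\mathrm{id})$, where $\tau$ is the flip of $\Bbbk^n\otimes\Bbbk^n$ and $C\in M_n(\Bbbk)$ is arbitrary; hence $\operatorname{Frobdim}\bigl(M_n(\Bbbk)\bigr)=\dim_\Bbbk\mathcal E_{M_n(\Bbbk)}=n^2$. (More conceptually, one can instead note that $M_n(\Bbbk)\otimes M_n(\Bbbk)$ is the regular bimodule of $M_n(\Bbbk)\otimes M_n(\Bbbk)^{\op}\cong M_{n^2}(\Bbbk)$, which is simple with simple module $M_n(\Bbbk)$ of dimension $n^2$; so the bimodule $M_n(\Bbbk)\otimes M_n(\Bbbk)$ is $n^2$ copies of $M_n(\Bbbk)$ and the space of bimodule morphisms from $M_n(\Bbbk)$ into it has dimension $n^2$.)

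Putting the pieces together, $\operatorname{Frobdim}(A)=\sum_{i=1}^r n_i^2=\dim_\Bbbk A$: the Frobenius dimension of a semisimple algebra over an algebraically closed field is completely determined, and in fact coincides with the $\Bbbk$-dimension of the algebra. The only genuinely non-formal step is the equality $\operatorname{Frobdim}\bigl(M_n(\Bbbk)\bigr)=n^2$; everything else is bookkeeping with results already proved. I expect the main obstacle to be organizing that computation cleanly — the direct solution of $(a\otimes 1)\omega=\omega(1\otimes a)$ in coordinates is a routine but slightly fiddly index chase — which is why I would prefer to run it through the semisimplicity of $M_n(\Bbbk)\otimes M_n(\Bbbk)^{\op}$.
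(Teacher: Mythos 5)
Your proposal is correct and follows essentially the same route as the paper: Artin--Wedderburn, invariance of the Frobenius dimension under isomorphism (Theorem~\ref{teo1}), and additivity over products (Proposition~\ref{P7}(1)). The only difference is that you additionally prove $\operatorname{Frobdim}\bigl(M_n(\Bbbk)\bigr)=n^2$ (the paper simply asserts it), and both your coordinate computation and your semisimplicity argument for that step are correct.
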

\begin{proof}
	By the Artin-Wedderburn Theorem we have that $A\cong M_{n_1}(\Bbbk)\times  M_{n_2}(\Bbbk)\times\cdots\times  M_{n_r}(\Bbbk)$. Then
	$$\operatorname{Frobdim}(A)=\operatorname{Frobdim}(M_{n_1}(\Bbbk)\times  M_{n_2}(\Bbbk)\times\cdots\times  M_{n_r}(\Bbbk))=\sum_{i=1}^r \operatorname{Frobdim}(M_{n_i}(\Bbbk))=\sum_{i=1}^rn_i^2.$$
	Finally 
	$$\operatorname{Frobdim}(A)=\sum_{i=1}^rn_i^2.$$
\end{proof}  

\begin{cor}
	Let $G$ be a finite group. If $\operatorname{char}(\Bbbk)$ does not divide the order of $G$ and $\Bbbk$ is an algebraically closed field, then it is possible to determine completely the Frobenius dimension of $\Bbbk G$.
\end{cor}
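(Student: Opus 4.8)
The plan is to reduce this statement to the previous corollary by means of Maschke's theorem. First I would invoke Maschke's theorem: since $\Bbbk$ is a field whose characteristic does not divide $|G|$, the group algebra $\Bbbk G$ is semisimple. Together with the hypothesis that $\Bbbk$ is algebraically closed, this places $\Bbbk G$ exactly in the scope of the preceding corollary, so its Frobenius dimension is determined.

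Next I would apply that corollary directly. By the Artin--Wedderburn Theorem one has $\Bbbk G\cong M_{n_1}(\Bbbk)\times M_{n_2}(\Bbbk)\times\cdots\times M_{n_r}(\Bbbk)$, where the $n_i$ are the dimensions of the irreducible $\Bbbk G$-modules, and the corollary then yields
$$\operatorname{Frobdim}(\Bbbk G)=\sum_{i=1}^r n_i^2.$$
Thus the Frobenius dimension is completely determined by the list of irreducible character degrees of $G$.

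Finally, to express this in closed form I would use the classical identity $\sum_{i=1}^r n_i^2=|G|$, which holds precisely under the present hypotheses ($\Bbbk$ algebraically closed, $\operatorname{char}(\Bbbk)$ not dividing $|G|$) and is simply the comparison of $\Bbbk$-dimensions in $\Bbbk G\cong\bigoplus_{i=1}^r M_{n_i}(\Bbbk)$. Hence $\operatorname{Frobdim}(\Bbbk G)=|G|$.

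I do not expect any real obstacle: the argument is a concatenation of Maschke's theorem, the previous corollary, and a standard dimension count. The only point requiring care is checking that the hypotheses of the earlier corollary are met, i.e.\ that semisimplicity of $\Bbbk G$ genuinely follows from $\operatorname{char}(\Bbbk)\nmid|G|$ together with algebraic closedness of $\Bbbk$ — which is exactly the content of Maschke's theorem.
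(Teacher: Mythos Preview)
Your proof is correct and follows essentially the same route as the paper: invoke Maschke's theorem to obtain semisimplicity of $\Bbbk G$, then apply the preceding corollary. You go slightly further than the paper by recording the closed form $\operatorname{Frobdim}(\Bbbk G)=\sum_i n_i^2=|G|$, which the paper does not state explicitly in this corollary; this is a correct and welcome sharpening.
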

\begin{proof}
	Applying Maschke’s theorem we have that $\Bbbk G$ is a semisimple algebra then, by the previous corollary,  it is possible to determine completely its Frobenius dimension.
\end{proof}

In the next results we are going to use an example presented in \cite{AGL15}, which has a small error in its calculation.  We shall now present the result quoted and its correction.\\

Let $G$ be a cyclic finite group of order $n$ and the group algebra $\k G$, with the natural basis $\bigl\{g^i:\;i=1,\dots,n\bigr\}$. This algebra is a nearly Frobenius algebra. Moreover, we can determine all the nearly Frobenius structures that it admits.

Using the bimodule condition of the coproduct, we can prove that a basis of the Frobenius space is
$$\ma{B}=\bigl\{\de_k:\k G\rt \k G\ot \k G: k\in{1,\dots, n}\bigr\},$$
where $\displaystyle{\de_1(1)=\sum_{i=1}^ng^i\ot g^{n+1-i}}$ and $\displaystyle{\de_k\bigl(1\bigr)=\sum_{i=1}^{k-1}g^i\ot g^{k-i}+\sum_{i=k}^ng^i\ot g^{n+k-i}}$ for $k=2,\dots, n$. \\In particular, we have that
$$\operatorname{Frobdim}(\k G)=\left|G \right|.$$

The general expression of any nearly Frobenius coproduct in the unit is
$$\de\bigl(1\bigr)=a_1\sum_{i=1}^ng^i\ot g^{n+1-i}+\sum_{k=2}^n a_k\left(\sum_{i=1}^{k-1}g^i\ot g^{k-i}+\sum_{i=k}^ng^i\ot g^{n+k-i}\right),$$ where $a_i\in\Bbbk$ for $i=1,\dots,n$.

\begin{cor}
	If $G$ is a finite abelian group, then it is possible to determine $\operatorname{Frobdim}(\Bbbk G)$.
\end{cor}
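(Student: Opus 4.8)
The plan is to reduce the abelian case to the cyclic case already treated, by combining the structure theorem for finite abelian groups with the behaviour of the Frobenius dimension under tensor products. First I would invoke the fundamental theorem for finite abelian groups to write $G\cong C_{n_1}\times C_{n_2}\times\cdots\times C_{n_r}$, where each $C_{n_i}$ is cyclic of order $n_i$ and $n_1n_2\cdots n_r=|G|$. The key algebraic observation is that the group algebra construction turns direct products of groups into tensor products of algebras: there is a canonical isomorphism of $\Bbbk$-algebras
$$\Bbbk G\cong \Bbbk C_{n_1}\otimes_\Bbbk\cdots\otimes_\Bbbk \Bbbk C_{n_r},$$
sending a basis element $(g_1,\dots,g_r)$ to $g_1\otimes\cdots\otimes g_r$.

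Next I would apply Theorem \ref{teo1}, which says that the Frobenius dimension is an isomorphism invariant, to obtain $\operatorname{Frobdim}(\Bbbk G)=\operatorname{Frobdim}(\Bbbk C_{n_1}\otimes\cdots\otimes\Bbbk C_{n_r})$. Then, using part (2) of Proposition \ref{P7} repeatedly --- an immediate induction on $r$, justified by the associativity of the tensor product of algebras --- this equals $\prod_{i=1}^r\operatorname{Frobdim}(\Bbbk C_{n_i})$.

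Finally, since each $C_{n_i}$ is a finite cyclic group of order $n_i$, the computation carried out just above the statement of this corollary gives $\operatorname{Frobdim}(\Bbbk C_{n_i})=|C_{n_i}|=n_i$. Therefore $\operatorname{Frobdim}(\Bbbk G)=\prod_{i=1}^r n_i=|G|$, which in particular shows that it is completely determined.

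I do not expect a real obstacle here. The only two points needing a brief verification are the algebra isomorphism $\Bbbk(G_1\times G_2)\cong \Bbbk G_1\otimes_\Bbbk \Bbbk G_2$, which is standard, and the extension of Proposition \ref{P7}(2) from two factors to finitely many, which is a routine induction. It is worth remarking that, in contrast with the earlier corollary treating $\Bbbk G$ for an arbitrary finite group, this argument requires no hypothesis on $\Bbbk$: neither algebraic closedness nor any condition on $\operatorname{char}(\Bbbk)$ is used.
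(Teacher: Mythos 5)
Your proposal is correct and follows essentially the same route as the paper: decompose $G$ into cyclic factors, identify $\Bbbk G$ with the tensor product of the $\Bbbk$-group algebras of those factors, and then combine Theorem \ref{teo1} with Proposition \ref{P7}(2) and the cyclic computation $\operatorname{Frobdim}(\Bbbk C_n)=n$. The only difference is that you spell out the routine induction over the number of tensor factors and note explicitly that no hypothesis on $\Bbbk$ is needed, both of which the paper leaves implicit.
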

\begin{proof}
	If $G$ is a finite abelian group, then $G=G_1\oplus G_2\oplus\cdots\oplus G_p$, where $G_i$ is a finite cyclic group for $i\in \{1,\dots ,p\}$. The group algebra $\Bbbk G$ is isomorphic, as a $\Bbbk$-algebra, to $\Bbbk G_1\otimes \Bbbk G_2\otimes\cdots\otimes \Bbbk G_p$. Therefore, applying Theorem \ref{teo1} and Proposition \ref{P7}
	$$\operatorname{Frobdim}(\Bbbk G)=\operatorname{Frobdim}(\Bbbk G_1\otimes \Bbbk G_2\otimes\cdots\otimes \Bbbk G_p)=\prod_{i=1}^p\operatorname{Frobdim}(\Bbbk G_i)=\prod_{i=1}^p\left|G_i \right|. $$
	Finally,
	$$\operatorname{Frobdim}(\Bbbk G)=\prod_{i=1}^p\left|G_i \right|.$$
\end{proof}

\begin{examples}\label{e1}
	We illustrate the results given in Proposition \ref{P7} with a couple of examples.	
	\begin{enumerate}
		\item Let's consider the cyclic groups $G$ and $H$ where $\left|G \right|=2$ and $\left|H \right|=3$ and their corresponding group algebras $A_1=\Bbbk G$, $A_2=\Bbbk H$. Then, by Proposition \ref{P7},1., $B=A_1\times A_2$ is a nearly Frobenius algebra of Frobenius dimension $5$.\\
		$$\mathcal{E}_{A_1}=\operatorname{span}_\Bbbk\left\{\de_1^1, \de_2^1\right\}, $$ where $\de_1^1(1)=g\otimes 1+1\otimes g$,and $\de_2^1(1)=g\otimes g+1\otimes 1$.
		$$\mathcal{E}_{A_2}=\operatorname{span}_\Bbbk\left\{\de_1^2, \de_2^2, \de_3^2 \right\},$$ where $\de_1^2(1)=h\otimes 1+h^2\otimes h^2+1\otimes h$, $\de_2^2(1)=h^2\otimes 1+h\otimes h+1\otimes h^2$, and $\de_3^2(1)=h\otimes h^2+1\otimes 1+h^2\otimes h$. Therefor
		$$\mathcal{E}_{B}=\operatorname{span}_\Bbbk\left\{\de_1, \de_2, \de_3, \de_4, \de_5 \right\},$$ where
		$\de_1(1,1)=\bigl(\de_1^1(1),0\bigr)$, 	$\de_2(1,1)=\bigl(\de_2^1(1),0\bigr)$,	$\de_3(1,1)=\bigl(0, \de_1^2(1)\bigr)$, $\de_4(1,1)=\bigl(0, \de_2^2(1)\bigr)$ and 	$\de_5(1,1)=\bigl(0, \de_3^2(1)\bigr)$.\\
		Then, the general expression of any nearly Frobenius coproduct in the unit is
		$$\de\bigl(1,1\bigr)=\left(a_1\de_1^1(1_G)+a_2\de_2^1(1_G), b_1\de_1^2(1_H)+b_2\de_2^2(1_H)+b_3\de_3^2(1_H)\right),$$ where $a_i, b_j\in\Bbbk$ for $i=1,2$ and $j=1,2,3$.
	
				\item 
	Consider the linear quiver  $Q: \xymatrix{\underset{1}{\bullet}\ar[r]^\eta  &  \underset{2}{\bullet}}$ and its associated path algebra: $$A=\Bbbk Q=\langle e_1, e_2, \eta\rangle.$$ It is known that $\mathcal{E}_A$ is a vector space of dimension 1, and a generator is the coproduct $\Delta:A\rightarrow A\ot A$ defined as  $$\Delta(1)=\eta\ot e_1+e_2\ot\eta.$$
	Now we will construct the tensor product of two copies of $A$: 
	$$B=A\otimes A=\langle e_1\ot e_1, e_1\ot e_2, e_2\ot e_1, e_2\ot e_2, e_1\ot\eta, \eta\ot e_1, \eta\ot e_2, e_2\ot\eta, \eta\ot\eta\rangle.$$
	This algebra admits only one coproduct, and it is $$\overline{\Delta}=(1\otimes\tau\otimes 1)\circ(\Delta\otimes\Delta).$$

	On the other hand, if we consider the next quiver
	$$\xymatrix{
		\underset{1}{\bullet}\ar[r]^\al\ar[d]_{\gamma} \ar@{..}[dr] &  \underset{2}{\bullet}\ar[d]^{\be}\\
		\underset{3}{\bullet}\ar[r]_\delta  &  \underset{4}{\bullet}
	}$$
	and the algebra $\displaystyle{C=\frac{\Bbbk Q}{\langle\al\be-\ga\delta\rangle}=\langle \mathbf{e_1}, \mathbf{e_2},\mathbf{ e_3}, \mathbf{e_4}, \al, \be, \gamma, \de, \al\be\rangle}$ we can prove that this algebra is isomorphic to $B$. The isomorphism  given on the basis is as follows:\\
	$$\varphi: B\rightarrow C$$ 
	$$\varphi\bigl(e_1\ot e_1\bigr)=\mathbf{e_1},\quad \varphi\bigl(e_1\ot e_2\bigr)=\mathbf{e_2},\quad \varphi\bigl(e_2\ot e_1\bigr)=\mathbf{ e_3},\quad \varphi\bigl(e_2\ot e_2\bigr)=\mathbf{e_4}$$
	$$\varphi\bigl(e_1\ot\eta\bigr)=\al,\quad\varphi\bigl(\eta\ot e_2\bigr)=\be,\quad\varphi\bigl(\eta\ot e_1\bigr)=\gamma,\quad \varphi\bigl(e_2\ot\eta\bigr)=\delta$$
	$$\varphi\bigl(\eta\ot\eta\bigr)=\al\be.$$
	It is clear that the isomorphism $\varphi$ respects the algebra
	structures. Then, we can conclude that $\mathcal{E}_C$ has dimension one and a generator is:  
	$$
	\de(\textbf{1})=\al\be\ot \mathbf{e_1}+\be\ot\al+\delta\ot\ga+\mathbf{e_4}\ot\al\be.
	$$
	\end{enumerate}	
\end{examples}

\begin{rem}
	In these lines we want to make notice that we cannot establish a nice property that relates the Frobenius dimension of a quotient algebra with the original algebra.
	First, in the Example 7 of \cite{AGL15} a nontrivial coproduct is constructed in the quotient algebra $A/J$ from a nontrivial structure on $A$, but $\operatorname{Frobdim}(A)=1$ and $\operatorname{Frobdim}(A/J)=3$.\\
	In addition, we can not always recover a nontrivial structure on the quotient from one on the original algebra, for example if we consider $A=\Bbbk \mathbb{A}_4 = \langle e_1, e_2, e_3, e_4, \alpha, \beta, \gamma \rangle$ with all the arrows having the same orientation and the radical square zero algebra $B=\Bbbk \mathbb{A}_4/I$, we know that $A$ admits only one nontrivial nearly Frobenius coproduct, that is
	$$\Delta\bigl(e_1\bigr)=\alpha\beta\gamma\otimes e_1,\, \Delta\bigl(e_2\bigr)=\beta\gamma\otimes\alpha,\, \Delta\bigl(e_3\bigr)=\gamma\otimes\alpha\beta,\, \Delta\bigl(e_4\bigr)=e_4\otimes\alpha\beta\gamma,$$
	and this structure is trivial in $B$. But we can prove that $B$ admits nontrivial nearly Frobenius coproducts, moreover $\operatorname{Frobdim}B=5$.  	 
\end{rem}
\vspace{0.3cm}
In the next paragraph we will give a nice interpretation of the Frobenius space of an algebra $A$ using hochschild cohomology.
\begin{rem}
	
	 For an $A$-bimodule $M$, where $A$ is an algebra we call
	$$M^A =\bigl\{m\in M: a m=m a\quad\forall a\in A\bigr\}$$
	the sub–bimodule of invariants. In Remark 1 of \cite{AGL15} is shown that every nearly coproduct is determined by its value in $1$, that is, we have a linear isomorphism
	$$\Phi:\mathcal{E}_A\rightarrow (A\otimes A)^A,\;\mbox{such that}\; \Phi(\de)=\de(1).$$
	Moreover, if we remember that the $0$-group of Hochschild cohomology of $M$ with coefficients in $A$ is
	$$H^0(A,M)=\bigl\{m\in M: am=ma\;\forall a\in A\bigr\}.$$
	In particular for $M=A\otimes A$ we have 
	$$\mathcal{E}_A\cong (A\otimes A)^A=H^0(A, A\otimes A).$$
	Then, it is possible to identify the Frobenius space of $A$ with the 0-group of Hochschild cohomology of $A\otimes A$ with coefficients in $A$.
	
\end{rem}

\vspace{.5cm}


\subsection{Normalized nearly Frobenius algebras}
In the following results we will restrict ourselves to work with a subfamily of nearly Frobenius algebras. This construction is motivated by the notion of normalized Fourier transform (see \cite{walter}).

\begin{defn}
	Let $A$ be an algebra and $\de$ a nearly Frobenius coproduct, we say that $\de$ is {\em normalized} if $m\circ\de=\operatorname{Id}_A$, where $m$ is the product of $A$. If $A$ admits a normalized Frobenius coproduct  we will say that $(A, \de)$ is a normalized nearly Frobenius algebra. 
\end{defn}

\begin{example}
	Let $G$ be a cyclic finite group. The group algebra $A=\k G$ is a nearly Frobenius algebra. We can consider the nearly Frobenius coproduct $\de(1)=\frac{1}{|G|}\sum_{k=1}^ng^k\ot g^{n-k}$. It is a simple verification that $\de$ is normalized.
\end{example}

\begin{prop}\label{prop16}
	\begin{enumerate}
		\item If $A$ and $B$ are nearly Frobenius algebras with normalized coproducts, then $C=A\times B$ has normalized coproduct. 
		\item If $A$ and $B$ are nearly Frobenius algebras with normalized coproducts, then $D=A\otimes B$ has normalized coproduct. 
	\end{enumerate}
\end{prop}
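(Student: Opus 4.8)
The plan is to reduce the normalization identity $m\circ\de=\operatorname{Id}$ to a single evaluation at the unit, and then to read off both statements from the descriptions of the coproducts already given in Remarks \ref{r6} and \ref{r7}. The key observation I would use is that for \emph{any} nearly Frobenius coproduct $\de$ on an algebra $A$, the composite $m\circ\de\colon A\to A$ is a morphism of $A$-bimodules: $\de$ is a bimodule morphism by definition, and $m\colon A\otimes A\to A$ is a bimodule morphism when $A\otimes A$ carries the outer bimodule structure. Hence $(m\circ\de)(a)=(m\circ\de)(a\cdot 1)=a\cdot(m\circ\de)(1)$ for every $a\in A$, so that $m\circ\de=\operatorname{Id}_A$ if and only if $(m\circ\de)(1)=1$. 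Thus in both parts it suffices to check that the coproduct constructed in the corresponding Remark sends the unit to an element whose product is the unit.

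For part 1, recall from Remark \ref{r6} that the coproduct on $C=A\times B$ is $\de_C(1)=\sum (a_1,0)\otimes(a_2,0)+\sum (0,b_1)\otimes(0,b_2)$, with $\de_A(1_A)=\sum a_1\otimes a_2$ and $\de_B(1_B)=\sum b_1\otimes b_2$. Applying the (componentwise) product of $C$ yields
$$m_C\bigl(\de_C(1)\bigr)=\Bigl(\sum a_1a_2,\,0\Bigr)+\Bigl(0,\,\sum b_1b_2\Bigr)=\bigl(m_A(\de_A(1_A)),\,m_B(\de_B(1_B))\bigr)=(1_A,1_B)=1_C,$$
where the third equality is exactly the hypothesis that $\de_A$ and $\de_B$ are normalized. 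By the reduction above, $m_C\circ\de_C=\operatorname{Id}_C$.

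For part 2, recall from Remark \ref{r7} that the coproduct on $D=A\otimes B$ is $\de_D=\tau\circ(\de_A\otimes\de_B)$, so $\de_D(1_A\otimes 1_B)=\tau\bigl(\sum a_1\otimes a_2\otimes\sum b_1\otimes b_2\bigr)=\sum\sum (a_1\otimes b_1)\otimes(a_2\otimes b_2)$. Applying the product of $D$, which under the canonical identification is $m_A\otimes m_B$, gives
$$m_D\bigl(\de_D(1_A\otimes 1_B)\bigr)=\sum\sum a_1a_2\otimes b_1b_2=\Bigl(\sum a_1a_2\Bigr)\otimes\Bigl(\sum b_1b_2\Bigr)=m_A(\de_A(1_A))\otimes m_B(\de_B(1_B))=1_A\otimes 1_B,$$
again by normalization of $\de_A$ and $\de_B$; hence $m_D\circ\de_D=\operatorname{Id}_D$.

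I do not expect a genuine obstacle in this proof. Once the reduction to the unit is set up, both verifications are one-line computations that only require carefully tracking the componentwise product in part 1 and the transposition $\tau$ in part 2, both of which are already spelled out in Remarks \ref{r6} and \ref{r7}. Alternatively, if one prefers to avoid the bimodule reduction, each identity can be verified directly on a basis with essentially the same computation, only with bulkier notation.
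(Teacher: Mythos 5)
Your proof is correct, and in substance it follows the same route as the paper: both parts rest on the explicit coproducts of Remarks \ref{r6} and \ref{r7} and on the normalization of $\de_A$ and $\de_B$. The one genuine difference is your preliminary reduction: you observe that $m\circ\de$ is an $A$-bimodule endomorphism of $A$ (since $\de$ is a bimodule map and $m$ is one for the outer structure on $A\ot A$), so normalization only needs to be checked on the unit. The paper does not isolate this fact; in part 1 it verifies $m\circ\de(c)=c$ directly for a general element $c=(c_1,c_2)$ (which amounts to the same computation as yours, carried along by the module action), and in part 2 it avoids elements altogether, using $m_D=(m_A\ot m_B)\circ\tau^{-1}$ to get the map-level identity $m_D\circ\de_D=(m_A\circ\de_A)\ot(m_B\circ\de_B)=\operatorname{Id}_D$. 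Your reduction lemma is a useful general observation (it also streamlines the later applications where normalization is checked on the unit), while the paper's part 2 argument is slightly cleaner in that it needs no evaluation at all; both verifications are complete and correct, including the factorization of the double sum $\sum\sum a_1a_2\ot b_1b_2=\bigl(\sum a_1a_2\bigr)\ot\bigl(\sum b_1b_2\bigr)$.
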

\begin{proof}
	By previous results we know that $C$ and $D$ are nearly Frobenius algebras. We only need to prove that the induced coproducts are normalized.
	\begin{enumerate}
		\item In the first case the coproduct is defined as
		$$\de(c)=\sum (c_1a_1,0)\otimes (a_2,0)+\sum (0,c_2b_1)\otimes(0,b_2),$$
		where  $c=(c_1,c_2)\in C$ and $\de_A(1_A)=\sum a_1\otimes a_2$, $\de_B(1_B)=\sum b_1\otimes b_2$. Then
		$$m\circ \de(c)=\sum (c_1a_1a_2,0)+\sum (0,c_2b_1b_2)$$
		Using that $\de_A$ and $\de_B$ are normalized we have that $\sum c_1a_1a_2=c_1$ and $\sum c_2b_1b_2=c_2$, therefore  $m\circ \de(c)=(c_1,c_2)=c$.
		\item In the second case the coproduct is defined as  
		$$\de_D:=\tau\circ \de_A\otimes\de_B :D=A\otimes B\rightarrow (A\otimes B)\otimes (A\otimes B),$$
		where 	$\tau: (A\otimes A)\otimes(B\otimes B)\rightarrow (A\otimes B)\otimes (A\otimes B)=D\otimes D$ is the transposition map. With this notation the product in $D$ can be described as
		$$m_D=(m_A\otimes m_B)\circ\tau^{-1}:(A\otimes B)\otimes (A\otimes B)\rightarrow A\otimes B.$$
		Then\\
		$$\begin{array}{rcl}
		 m_D\circ \de_D &= &(m_A\otimes m_B)\circ\tau^{-1}\circ \tau\circ \bigl(\de_A\otimes\de_B\bigr)\\
		                                    &= &(m_A\otimes m_B)\circ\bigl(\de_A\otimes\de_B\bigr)\\
		                                    &= &\bigl(m_A\circ\de_A\bigr)\otimes \bigl(m_B\circ\de_B\bigr)\\
		                                    &= &id_A\otimes id_B= id_D.
		 \end{array}$$
		 This concludes the proof that the coproduct of $D$ is normalized.
	\end{enumerate}
\end{proof}


\subsection{Separable algebras}
In this section, we present known results about separable algebras and we study their relationship with the notion of normalized nearly Frobenius algebras.

A good reference for this section is \cite{assem}.

\begin{defn}
Let $R$ be a commutative ring. An $R$-algebra $A$ is called {\em separable} 	if the multiplication map
	$$m: A\otimes_R  A \rightarrow A$$	
	has a section $\sigma$ (i.e. $m\circ\sigma = Id_A$) which is an $A$-bimodule homomorphism.
\end{defn}

\begin{prop}
	Let $R$ be a commutative ring and let $A$ be a separable $R$-algebra. Given a section $\sigma$ of $m$ which is an $A$-bimodule homomorphism, set\\
	$e = \sigma(1)$ and write $\displaystyle{e=\sum^n_{i=1}x_i\otimes_R y_i}$ for suitable $n\in \mathbb{N}$ and $x_i$, $y_i\in A$ for every $i=1,\dots, n$.
	Then we have
	\begin{enumerate}
		\item[(1)] $m(e)=1$\quad i.e.\quad $\displaystyle{\sum^n_{i=1}x_iy_i=1}$.
		\item[(2)]  $ae = ea$\quad i.e.\quad $\displaystyle{\sum^n_{i=1}ax_i\otimes_R y_i=\sum_{i=1}^nx_i\otimes y_ia}$ for every $a\in A$.
	\end{enumerate}
	 \end{prop}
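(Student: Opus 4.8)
The plan is that both statements are immediate unwindings of the two hypotheses on $\sigma$, namely that $m\circ\sigma=\mathrm{Id}_A$ and that $\sigma$ is a morphism of $A$-bimodules. No computation beyond substituting definitions is needed; the only thing to set up carefully is which $A$-bimodule structures are being used.

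For (1) I would simply evaluate: since $\sigma$ is a section of $m$ we have $m\circ\sigma=\mathrm{Id}_A$, hence $m(e)=m(\sigma(1))=(m\circ\sigma)(1)=1$. Writing $e=\sum_{i=1}^n x_i\otimes_R y_i$ and recalling $m(x\otimes_R y)=xy$, this is exactly $\sum_{i=1}^n x_iy_i=1$.

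For (2) I would make explicit that $A$ is an $A$-bimodule over itself via $a\cdot c\cdot b=acb$, and that $A\otimes_R A$ carries the outer $A$-bimodule structure $a\cdot(x\otimes_R y)\cdot b=ax\otimes_R yb$; it is with respect to these that $\sigma$ is a bimodule homomorphism. Then, for any $a\in A$, I compute $\sigma(a)$ in two ways. Using $a=a\cdot 1$ and left $A$-linearity, $\sigma(a)=\sigma(a\cdot 1)=a\cdot\sigma(1)=a\cdot e=ae$; using $a=1\cdot a$ and right $A$-linearity, $\sigma(a)=\sigma(1\cdot a)=\sigma(1)\cdot a=e\cdot a=ea$. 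Comparing gives $ae=ea$ in $A\otimes_R A$, and substituting $e=\sum_{i=1}^n x_i\otimes_R y_i$ yields $\sum_{i=1}^n ax_i\otimes_R y_i=\sum_{i=1}^n x_i\otimes_R y_ia$ for every $a\in A$.

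I do not expect any real obstacle here: the statement is essentially a reformulation of the section/bimodule-homomorphism conditions in terms of the element $e=\sigma(1)$. The only point deserving a word of care is being precise about the two bimodule structures, so that the two rewritings of $\sigma(a)$ are bona fide instances of the hypothesis $\sigma(acb)=a\sigma(c)b$; once that is said, the proof is three short lines.
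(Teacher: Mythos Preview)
Your proof is correct and is exactly the standard argument: (1) follows from $m\circ\sigma=\operatorname{Id}_A$ evaluated at $1$, and (2) from computing $\sigma(a)$ as $a\cdot\sigma(1)$ and as $\sigma(1)\cdot a$ using the outer $A$-bimodule structure on $A\otimes_R A$. The paper itself states this proposition without proof (it is quoted as background from the reference \cite{assem}), so there is nothing to compare against; your write-up would serve perfectly well as the omitted verification.
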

 
 \begin{defn}
 	Let $A$ be an algebra over a commutative ring $R$. An element $e\in A\otimes_R A$ is called a {\em separability element} for $A$ (over $R$) if $e$ fulfills (1) and (2).
 \end{defn}

\begin{prop}
	Let $A$ be an algebra over a commutative ring $R$. Then\\
	$A$ is a separable $R$-algebra $\Leftrightarrow$ $A\otimes_R A$ contains a separability element for $A$ over $R$.\\
	Moreover any separability element of $A$ is an idempotent element of the ring $A\otimes_R A^{op}$.
\end{prop}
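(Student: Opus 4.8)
The plan is to treat the two implications of the equivalence separately and then settle the idempotency statement by a direct computation; all three parts are short once the previous proposition is available. For the implication ``$A$ separable $\Rightarrow$ $A\otimes_R A$ contains a separability element'', there is essentially nothing to do: if $\sigma$ is an $A$-bimodule section of $m$, then the previous proposition says precisely that $e=\sigma(1)$ satisfies $(1)$ and $(2)$, which is exactly the definition of a separability element.

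For the converse, suppose $e=\sum_{i=1}^n x_i\otimes_R y_i$ satisfies $(1)$ and $(2)$. I would define $\sigma\colon A\to A\otimes_R A$ by $\sigma(a)=\sum_i ax_i\otimes_R y_i$; by $(2)$ this also equals $\sum_i x_i\otimes_R y_i a$, and I would use whichever of the two expressions is convenient. Left $A$-linearity of $\sigma$ is immediate from $\sigma(a)=\sum_i ax_i\otimes_R y_i$, and right $A$-linearity from $\sigma(a)=\sum_i x_i\otimes_R y_i a$. Finally $m(\sigma(a))=m\bigl(\sum_i x_i\otimes_R y_i a\bigr)=\bigl(\sum_i x_iy_i\bigr)a=a$ by $(1)$, so $\sigma$ is a bimodule section of $m$, i.e. $A$ is separable; note $\sigma(1)=e$, so the separability element recovered from $\sigma$ is the one we began with.

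For the ``moreover'' clause, recall that the multiplication of the ring $A\otimes_R A^{\op}$ is $(u\otimes v)(u'\otimes v')=uu'\otimes v'v$, so that $e^2=\sum_{i,j}x_ix_j\otimes_R y_jy_i$. I would reorganize the inner sum using the two-sided $A$-action on $e$ (left action on the first tensor factor, right action on the second): for each fixed $i$, $\sum_j x_ix_j\otimes_R y_jy_i = x_i\cdot(e\cdot y_i)$. Condition $(2)$ with $a=y_i$ gives $e\cdot y_i=y_i\cdot e$, whence $x_i\cdot(e\cdot y_i)=x_i\cdot(y_i\cdot e)=(x_iy_i)\cdot e=\sum_j x_iy_ix_j\otimes_R y_j$. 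Summing over $i$ and using $(1)$ to collapse $\sum_i x_iy_i=1$ yields $e^2=1\cdot e=e$, so $e$ is an idempotent of $A\otimes_R A^{\op}$.

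The only genuine obstacle is bookkeeping: one must keep careful track of which tensor factor each product acts on, and in particular not forget the order reversal built into $A^{\op}$ when expanding $e^2$. Once the two-sided description of $\sigma$ and the rewriting $\sum_j x_ix_j\otimes_R y_jy_i=x_i\cdot(e\cdot y_i)$ are in place, every remaining step is a one-line manipulation with $(1)$ and $(2)$.
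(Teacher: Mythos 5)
Your proof is correct. The paper itself states this proposition without proof, quoting it as a known result (cf.\ \cite{assem}, \cite{Ford}), and your argument is the standard one: defining $\sigma(a)=a\cdot e$, using condition (2) to obtain both left and right $A$-linearity and condition (1) to get $m\circ\sigma=\operatorname{Id}_A$, and verifying idempotency via $e^{2}=\sum_{i}x_i\cdot(e\cdot y_i)=\sum_i x_i\cdot(y_i\cdot e)=\bigl(\sum_{i}x_iy_i\bigr)\cdot e=e$ in $A\otimes_R A^{\mathrm{op}}$ — so you have simply supplied, correctly, the details the paper leaves to the references.
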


\begin{prop}
	Let $A$ be an algebra over a field $\Bbbk$. If $A$ is separable over $\Bbbk$, then $\operatorname{dim}_\Bbbk(A) < \infty$.
\end{prop}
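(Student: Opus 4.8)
The plan is to exploit the separability element directly. By the two preceding propositions, separability of $A$ over $\Bbbk$ yields an element $e = \sum_{i=1}^{n} x_i \otimes_\Bbbk y_i \in A \otimes_\Bbbk A$ satisfying $\sum_{i} x_i y_i = 1$ and $ae = ea$ for every $a \in A$, i.e. $\sum_i a x_i \otimes y_i = \sum_i x_i \otimes y_i a$. First I would arrange that $n$ is minimal among all such representations of $e$; a standard argument then shows that $y_1, \dots, y_n$ are linearly independent over $\Bbbk$ (if $y_n$ were a combination of $y_1,\dots,y_{n-1}$ one could absorb it into the other tensor factors and shorten the expression, contradicting minimality). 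The goal is to prove that $A$ is spanned by the finite set $\{x_k y_i : 1 \le k,\, i \le n\}$, which gives $\operatorname{dim}_\Bbbk A \le n^2 < \infty$.

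The core step runs as follows. Set $V = \operatorname{span}_\Bbbk\{x_1,\dots,x_n\} \subseteq A$, a finite-dimensional subspace, and let $\pi : A \to A/V$ be the canonical projection. Applying $\pi \otimes \operatorname{id}_A$ to the identity $\sum_i a x_i \otimes y_i = \sum_i x_i \otimes y_i a$, the right-hand side vanishes since every $x_i$ lies in $V$; hence $\sum_i \pi(ax_i) \otimes y_i = 0$ in $(A/V) \otimes_\Bbbk A$. Now complete $y_1,\dots,y_n$ to a $\Bbbk$-basis of $A$; then $(A/V)\otimes_\Bbbk A$ decomposes as the direct sum of the subspaces $(A/V)\otimes y_j\Bbbk$ over the basis vectors $y_j$, so the vanishing forces $\pi(ax_i) = 0$ for each $i$. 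In other words, $ax_i \in V$ for all $a \in A$ and all $i = 1,\dots,n$; that is, $V$ is a left ideal of $A$ containing $x_1,\dots,x_n$.

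To finish, for arbitrary $a \in A$ use $\sum_i x_i y_i = 1$ to write $a = a\cdot 1 = \sum_i (ax_i)\, y_i$, and since each $ax_i \in V = \operatorname{span}_\Bbbk\{x_1,\dots,x_n\}$, expanding shows $a \in \operatorname{span}_\Bbbk\{x_k y_i : 1\le k,\, i \le n\}$. Hence $A$ is finite-dimensional. The only delicate point is the core step — specifically, that $\sum_i \pi(ax_i)\otimes y_i = 0$ implies each $\pi(ax_i) = 0$; this is exactly where the linear independence of the $y_i$, guaranteed by the minimality of the chosen representation of $e$, is used, since it lets us treat the $y_i$ as part of a basis and read off the corresponding coordinates. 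Everything else is routine bookkeeping.
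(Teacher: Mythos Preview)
Your argument is correct. The key observation---that choosing a representation $e=\sum_{i=1}^n x_i\otimes y_i$ of minimal length forces the $y_i$ to be linearly independent, and that this independence lets you read off $\pi(ax_i)=0$ term by term from $\sum_i\pi(ax_i)\otimes y_i=0$ in $(A/V)\otimes_\Bbbk A$---is sound; for vector spaces over a field, if $y_1,\dots,y_n$ are independent in $A$ then the map $W^n\to W\otimes_\Bbbk A$, $(w_1,\dots,w_n)\mapsto\sum_i w_i\otimes y_i$, is injective for any $\Bbbk$-space $W$, so no appeal to a full basis of $A$ is strictly needed (though your basis argument is also fine, given choice). Once $V$ is a left ideal, the identity $a=\sum_i(ax_i)y_i$ immediately gives $A=\operatorname{span}_\Bbbk\{x_ky_i\}$ and hence $\dim_\Bbbk A\le n^2$.

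As for comparison: the paper does not actually supply a proof of this proposition. It is listed among several background facts about separable algebras quoted from the reference \cite{assem}, with no argument given. Your proof is therefore not competing with one in the paper; it is a clean, self-contained elementary argument that avoids the more structural routes one sometimes sees (e.g.\ deducing finiteness via semisimplicity or via projectivity of $A$ as an $A^e$-module combined with a finiteness argument on $A^e$). The direct tensor-rank approach you use is arguably the most economical.
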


\begin{thm}
Let $A$ be an algebra over a field $\Bbbk$ of finite dimension. Then, the following conditions are equivalent.
		\begin{enumerate}
			\item[(1)] $A$ has Hochschild cohomological dimension $0$.
			\item[(2)] $A$ is projective as $A^e$-module.
			\item[(3)] $A$ has a separability element.
		\end{enumerate}
\end{thm}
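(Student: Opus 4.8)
The plan is to prove the three implications in a cycle $(1)\Rightarrow(2)\Rightarrow(3)\Rightarrow(1)$, using only standard homological algebra over the enveloping algebra $A^e = A\otimes_\Bbbk A^{\op}$, together with the bar resolution. Throughout, recall that an $A$-bimodule is the same thing as an $A^e$-module, that $A$ itself is an $A^e$-module via $(a\otimes b)\cdot c = acb$, and that Hochschild cohomology is $HH^n(A,M) = \operatorname{Ext}^n_{A^e}(A,M)$. Having Hochschild cohomological dimension $0$ means $HH^n(A,M)=0$ for all bimodules $M$ and all $n\geq 1$; equivalently $\operatorname{Ext}^n_{A^e}(A,-)=0$ for $n\geq 1$.

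\textbf{$(1)\Rightarrow(2)$.} Suppose $A$ has Hochschild cohomological dimension $0$. Then $\operatorname{Ext}^1_{A^e}(A,N)=0$ for every $A^e$-module $N$, which is exactly the statement that $A$ is a projective $A^e$-module. (One can phrase this as: every short exact sequence of bimodules $0\to N\to E\to A\to 0$ splits, since the connecting map into $\operatorname{Ext}^1_{A^e}(A,N)$ vanishes; hence $A$ is projective.) This step is essentially a tautology once the cohomology is identified with $\operatorname{Ext}_{A^e}$.

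\textbf{$(2)\Rightarrow(3)$.} Suppose $A$ is projective as an $A^e$-module. Consider the multiplication map $m:A\otimes_\Bbbk A\to A$; this is a surjective homomorphism of $A^e$-modules (where $A\otimes_\Bbbk A\cong A^e$ as a left $A^e$-module, via the outer action). Since $A$ is projective, the identity map $A\to A$ lifts through $m$, i.e. there is an $A^e$-module homomorphism $\sigma:A\to A\otimes_\Bbbk A$ with $m\circ\sigma = \operatorname{Id}_A$. Setting $e = \sigma(1)$, the fact that $\sigma$ is a bimodule map gives $ae = ea$ for all $a\in A$, and $m\circ\sigma=\operatorname{Id}$ gives $m(e)=1$. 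Thus $e$ is a separability element, by the definition recalled just above the theorem.

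\textbf{$(3)\Rightarrow(1)$.} Suppose $A$ has a separability element $e=\sum x_i\otimes y_i$ with $\sum x_iy_i=1$ and $ae=ea$ for all $a$. Define $\sigma:A\to A\otimes_\Bbbk A$ by $\sigma(a) = ae = ea$; one checks directly that $\sigma$ is an $A^e$-module homomorphism splitting $m$. Now use the (reduced or unreduced) bar resolution of $A$ as an $A^e$-module; the splitting $\sigma$ of $m: A^e = A\otimes_\Bbbk A\to A$ produces a contracting homotopy for the augmented bar complex, exhibiting $A$ as a direct summand of $A^e$ — hence $A$ is $A^e$-projective, so $\operatorname{Ext}^n_{A^e}(A,M)=0$ for all $n\geq 1$ and all $M$, i.e. the Hochschild cohomological dimension is $0$. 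Alternatively, and more cheaply, $(3)\Rightarrow(2)$ is immediate from the existence of $\sigma$ (the surjection $m$ splits, so $A$ is a summand of the projective $A^e$-module $A\otimes_\Bbbk A$), and then $(2)\Rightarrow(1)$ follows since a projective module has no higher Ext out of it; this makes the cycle airtight without invoking homotopies.

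The only mild subtlety — and the step I would be most careful about — is the clean identification $A\otimes_\Bbbk A\cong A^e$ as left $A^e$-modules together with the observation that $m$ is then $A^e$-linear; once that bookkeeping is in place, every implication is a one-line consequence of the definition of projectivity and of $HH^*=\operatorname{Ext}_{A^e}$. The finite-dimensionality hypothesis on $A$ is not actually needed for the equivalences $(1)\Leftrightarrow(2)\Leftrightarrow(3)$ themselves; it is only relevant (via the preceding proposition) for tying these to $\dim_\Bbbk A<\infty$ in the separable case, so I would not use it in the proof proper.
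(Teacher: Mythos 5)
Your proof is correct. Note that the paper itself does not prove this theorem: it is listed among the ``known results about separable algebras'' with \cite{assem} as the general reference, so there is no internal argument to compare against. Your cycle $(1)\Rightarrow(2)\Rightarrow(3)\Rightarrow(1)$ is the standard one and each step is sound: the identification $HH^n(A,M)=\operatorname{Ext}^n_{A^e}(A,M)$ is unproblematic over a field (so $A$ is free as a $\Bbbk$-module), vanishing of $\operatorname{Ext}^1_{A^e}(A,-)$ does give projectivity by splitting a presentation $0\to K\to A^e\to A\to 0$, lifting $\operatorname{Id}_A$ through $m$ yields the separability element, and conversely $\sigma(a)=ae=ea$ is a bimodule splitting of $m$, exhibiting $A$ as a direct summand of the free $A^e$-module $A\otimes_\Bbbk A$, whence all higher $\operatorname{Ext}$ groups vanish. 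The only loose phrase is the claim that $\sigma$ ``produces a contracting homotopy for the augmented bar complex'' --- that requires an extra (standard but not one-line) construction --- but you immediately replace it by the cheaper and fully rigorous route $(3)\Rightarrow(2)\Rightarrow(1)$, so nothing depends on it. Your closing observation is also right: finite dimensionality of $A$ is not used in these equivalences; in this context it only interacts with the separate fact, recorded in the paper, that a separable algebra over a field is automatically finite dimensional.
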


\begin{thm}
	An algebra $A$ admits a normalized nearly Frobenius coproduct if and only if $A$ is separable.
\end{thm}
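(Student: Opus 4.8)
The plan is to prove the two implications separately, translating between the ``normalized nearly Frobenius coproduct'' language and the ``separability element'' language introduced above. The key observation is that a nearly Frobenius coproduct $\de \colon A \to A \otimes A$ is exactly a left and right $A$-linear map, and by the Remark identifying $\mathcal{E}_A$ with $(A\otimes A)^A$ it is determined by $e := \de(1)$, which satisfies the invariance condition $(a\otimes 1)e = e(1\otimes a)$ for all $a\in A$ — that is, condition (2) in the definition of separability element. The normalization condition $m\circ\de = \operatorname{Id}_A$ evaluated at $1$ gives $m(e)=1$, which is condition (1). So at the level of the element $e$, the two notions literally coincide; the work is to check that the bookkeeping between $\de$ and $e = \de(1)$ goes through in both directions, including the passage from ``$m(e)=1$'' to the stronger ``$m\circ\de = \operatorname{Id}_A$ as maps.''

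First I would assume $(A,\de)$ is a normalized nearly Frobenius algebra and set $e := \de(1) = \sum x_i \otimes y_i$. Since $\de$ is an $A$-bimodule morphism, $\de(a) = \de(a\cdot 1) = a\cdot\de(1) = (a\otimes 1)e$ and also $\de(a) = \de(1\cdot a) = \de(1)\cdot a = e(1\otimes a)$; equating these gives condition (2). For condition (1), note $m\circ\de = \operatorname{Id}_A$ gives in particular $m(\de(1)) = 1$, i.e. $\sum x_i y_i = 1$. Hence $e$ is a separability element, and by the Proposition relating separability elements to separable algebras, $A$ is separable. (Here $R = \Bbbk$ and $A\otimes A = A\otimes_\Bbbk A$; the finiteness of $\dim_\Bbbk A$, needed to invoke the Theorem on Hochschild cohomological dimension, comes for free once we know $A$ has a separability element.)

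Conversely, suppose $A$ is separable, and let $e = \sum x_i \otimes y_i \in A\otimes_\Bbbk A$ be a separability element, so $\sum x_i y_i = 1$ and $(a\otimes 1)e = e(1\otimes a)$ for all $a$. Define $\de\colon A\to A\otimes A$ by $\de(a) := (a\otimes 1)e = \sum a x_i \otimes y_i$. Using condition (2), $\de(a) = e(1\otimes a)$ as well, so $\de$ is simultaneously left $A$-linear (clear from the definition) and right $A$-linear: $\de(ab) = e(1\otimes ab) = e(1\otimes a)(1\otimes b) = \de(a)(1\otimes b)$. Being an $A$-bimodule map, $\de$ is a nearly Frobenius coproduct (this is exactly the alternative definition given above, since $A$-bimodule linearity of $\de$ is equivalent to the commutativity of the two squares). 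Finally $m\circ\de(a) = m(\sum a x_i \otimes y_i) = \sum a x_i y_i = a\left(\sum x_i y_i\right) = a$, so $\de$ is normalized. This completes both directions.

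I do not expect a serious obstacle here: the statement is essentially a dictionary between two equivalent repackagings of the same data, and the only mild care needed is (a) being explicit that a nearly Frobenius coproduct is the same thing as an $A$-bimodule map $A\to A\otimes A$ — which follows from the alternative definition together with the Remark that $\de$ is determined by $\de(1)\in (A\otimes A)^A$ — and (b) checking that $m\circ\de = \operatorname{Id}_A$ as maps follows from $m(\de(1))=1$, which is immediate once $\de(a) = a\cdot\de(1)$. If anything is delicate it is purely notational: keeping the sides straight in the invariance identity $(a\otimes 1)e = e(1\otimes a)$ and making sure it is used in the correct direction when verifying right-linearity of $\de$.
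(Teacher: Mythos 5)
Your proof is correct and follows essentially the same route as the paper: identify $e=\de(1)$, observe that the bimodule property of $\de$ gives the invariance condition $(a\otimes 1)e=e(1\otimes a)$ while normalization gives $m(e)=1$, and conversely rebuild $\de(a)=(a\otimes 1)e$ from a separability element and check bimodule linearity and $m\circ\de=\operatorname{Id}_A$. Your write-up just spells out the bookkeeping (in particular the right-linearity check) a bit more explicitly than the paper does.
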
	
\begin{proof}
	If $\de$ is a normalized nearly Frobenius coproduct then $\de(1)\in A\ot A$ such that  $a\de(1)=\de(1)a$ $\forall a\in A$, and $m\bigl(\de(1)\bigr)=1$. Therefore $e=\de(1)$ is a separability element thus $A$ is separable. 		\\
	
	Conversely, let be $e$ a separability element, first note that $ae=ea$ for all $a\in A$ then it induces a nearly Frobenius coproducto $\de$ such that $\de(1)=e$. 
	
	Finally, the condition $m(e)=1$ say that $m\bigl(\de(1)\bigr)=1$  then $m\bigl(\de(a)\bigr)=m\bigl(a\de(1)\bigr)=am\bigl(\de(1)\bigr)=a1=a$, $\forall a \in A$, thus $m\circ \de=\operatorname{Id}_A$. Therefore $\de$ is a normalized nearly Frobenius coproduct.
\end{proof}

\begin{rem}
	Note that we prove, in particular, that every separable algebra is a nearly Frobenius algebra. Moreover, the concept of nearly Frobenius coproduct or nearly Frobenius algebra is a weakening of the concept of separable algebra.
	
\end{rem}

\begin{example}
Every Azumaya algebra is nearly Frobenius algebra. Remember that an $R$-algebra $A$ is said to be an {\em Azumaya $R$-algebra} if $A$ is both central and 	separable over $R$. See \cite{Ford}.
\end{example}

An immediate consequence of Proposition 18, Theorem 21 and Theorem 22 is the following result.

\begin{prop}
$A$ admits a normalized nearly Frobenius coproduct if and only if $A$ has Hochschild cohomological dimension $0$.
\end{prop}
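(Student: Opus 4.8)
The plan is to chain together the equivalences already established in the excerpt. By Theorem~22, $A$ admits a normalized nearly Frobenius coproduct if and only if $A$ is separable. By Proposition~18, if $A$ is separable over a field $\Bbbk$, then $\dim_\Bbbk(A) < \infty$, so in the forward direction we are automatically in the finite-dimensional setting where Theorem~21 applies. Theorem~21 then gives that, for a finite-dimensional $A$, separability (condition~(3), having a separability element) is equivalent to $A$ having Hochschild cohomological dimension $0$ (condition~(1)). Composing these, a normalized nearly Frobenius coproduct on $A$ exists if and only if $A$ has Hochschild cohomological dimension $0$.

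Concretely, I would argue as follows. ($\Rightarrow$) Suppose $A$ admits a normalized nearly Frobenius coproduct. By Theorem~22, $A$ is separable. By Proposition~18, $\dim_\Bbbk A < \infty$, and by Theorem~22 (or the characterization via separability elements preceding it) $A$ has a separability element; hence condition~(3) of Theorem~21 holds, so by Theorem~21 condition~(1) holds, i.e. $A$ has Hochschild cohomological dimension $0$. ($\Leftarrow$) Suppose $A$ has Hochschild cohomological dimension $0$. One must first note that this forces $\dim_\Bbbk A < \infty$ (Hochschild cohomological dimension $0$ means $A$ is projective over $A^e$, and together with $A$ being finitely generated — or invoking the standard fact that $\mathrm{Hdim}\,A = 0$ implies $A$ is separable, hence finite-dimensional by Proposition~18); once finite dimension is in hand, Theorem~21 gives that $A$ has a separability element, so $A$ is separable, and Theorem~22 produces a normalized nearly Frobenius coproduct.

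The one genuinely delicate point — the main obstacle — is the finite-dimensionality hypothesis in Theorem~21. The proposition as stated has no finiteness assumption on $A$, yet Theorem~21 is stated only for finite-dimensional algebras. In the direction starting from a normalized coproduct this is harmless, since Proposition~18 supplies finiteness for free. In the reverse direction one needs to know that Hochschild cohomological dimension $0$ already entails finite dimension over $\Bbbk$; this is standard (an algebra of Hochschild dimension $0$ is separable, and separable $\Bbbk$-algebras are finite-dimensional by Proposition~18), so I would simply remark on it. With that observation the proof is a two-line composition of the cited results.

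\begin{proof}
Assume first that $A$ admits a normalized nearly Frobenius coproduct $\de$. By Theorem~22, $A$ is separable over $\Bbbk$, and by Proposition~18 this forces $\dim_\Bbbk A < \infty$. Since $A$ is separable it possesses a separability element, so condition~(3) of Theorem~21 holds; being finite-dimensional, Theorem~21 applies and yields condition~(1), that is, $A$ has Hochschild cohomological dimension $0$.

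Conversely, assume $A$ has Hochschild cohomological dimension $0$. Then $A$ is projective as an $A^e$-module, hence (being also finitely generated over $A^e$ as a cyclic module) separable over $\Bbbk$; by Proposition~18, $\dim_\Bbbk A < \infty$. Now Theorem~21 applies: condition~(1) gives condition~(3), so $A$ has a separability element and is therefore separable. By Theorem~22, $A$ admits a normalized nearly Frobenius coproduct.
\end{proof}
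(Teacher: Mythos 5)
Your argument is correct and is essentially the paper's own proof: the paper simply declares the proposition an immediate consequence of Proposition~18, Theorem~21 and Theorem~22, which is exactly the chain you spell out. Your extra care in the reverse direction (noting that Hochschild cohomological dimension $0$ gives projectivity over $A^e$, hence a bimodule splitting of $m$ and so separability and finite dimension before invoking Theorem~21) just makes explicit a point the paper leaves tacit.
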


\begin{defn}
		An algebra $A$ over an algebraically closed field $\Bbbk$ is called \emph{semisimple} if $A$ is finite dimensional and every left $A$-module is projective.
\end{defn}

The next result can be proved using Theorem 4.5.7 of \cite{Ford} together with the theorem of Artin-Wedderburn.
\begin{prop}
	Let $A$ be a separable algebra over a field $\Bbbk$, then $A$ is semisimple. If $\Bbbk$ is a perfect field then the concepts are equivalents. 
\end{prop}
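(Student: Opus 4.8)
The plan is to prove the two implications separately: that every separable $\Bbbk$-algebra is semisimple (for an arbitrary field $\Bbbk$), and that, when $\Bbbk$ is perfect, every semisimple $\Bbbk$-algebra is separable; together these give the asserted equivalence.

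For the first implication I would use the two facts already available: a separable $\Bbbk$-algebra is finite dimensional, and, being finite dimensional and separable, it possesses a separability element $e=\sum_{i=1}^{n}x_i\otimes y_i$, so that $\sum_i x_iy_i=1$ and $\sum_i ax_i\otimes y_i=\sum_i x_i\otimes y_ia$ for every $a\in A$. To conclude that $A$ is semisimple it is enough to show that every submodule $N$ of every left $A$-module $M$ is a direct summand, and for this I would run the classical averaging argument. Since $N$ is a $\Bbbk$-subspace of $M$ there is a $\Bbbk$-linear retraction $p\colon M\to N$ of the inclusion; set $q\colon M\to M$, $q(m)=\sum_{i}x_i\,p(y_im)$. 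The sum is finite so $q$ is well defined; $q$ maps $M$ into $N$ because $p$ does and $N$ is a submodule; $q$ is left $A$-linear, since applying the $\Bbbk$-linear map $u\otimes v\mapsto u\,p(vm)$ to the identity $\sum_i ax_i\otimes y_i=\sum_i x_i\otimes y_ia$ yields exactly $a\,q(m)=q(am)$; and $q$ restricts to the identity on $N$, because for $m\in N$ one has $y_im\in N$, hence $p(y_im)=y_im$ and $q(m)=\sum_i x_iy_im=m$. Thus $q$ splits the inclusion $N\hookrightarrow M$, so every short exact sequence of left $A$-modules splits, every left $A$-module is projective, and $A$ is semisimple.

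For the converse, assuming $\Bbbk$ perfect and $A$ semisimple, I would invoke the Artin--Wedderburn theorem to write $A\cong B_1\times\cdots\times B_r$ with $B_i=M_{n_i}(D_i)$ for finite-dimensional division $\Bbbk$-algebras $D_i$. By the theorem identifying separable algebras with those admitting a normalized nearly Frobenius coproduct, together with Proposition~\ref{prop16}, the class of separable $\Bbbk$-algebras is closed under finite products, so it suffices to see each $B_i$ is separable over $\Bbbk$. Now $B_i$ is simple with centre $Z_i:=Z(D_i)$, hence a central simple $Z_i$-algebra, so by Theorem~4.5.7 of \cite{Ford} it is separable over $Z_i$; and since $\Bbbk$ is perfect the finite field extension $Z_i/\Bbbk$ is separable, whence $Z_i$ is a separable $\Bbbk$-algebra. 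By transitivity of separability along the tower $\Bbbk\subseteq Z_i\subseteq B_i$, each $B_i$ is separable over $\Bbbk$, and therefore so is $A$.

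The step I expect to be the main obstacle is the converse, and specifically the appeal to the classical structure theory packaged in Theorem~4.5.7 of \cite{Ford}: that central simple algebras and finite separable field extensions over a perfect field are separable as algebras, and that separability is transitive in a tower. The forward implication, by contrast, is entirely routine once the separability element is in hand. I would also record that perfectness of $\Bbbk$ cannot be dropped: if $L/\Bbbk$ is a nontrivial finite purely inseparable extension, then $L$ is a field, hence a semisimple $\Bbbk$-algebra, but $L\otimes_\Bbbk L$ is not reduced and so $L$ is not separable over $\Bbbk$.
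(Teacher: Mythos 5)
Your proof is correct, and it is substantially more self-contained than what the paper offers: the paper gives no written argument at all, only the remark that the result ``can be proved using Theorem~4.5.7 of \cite{Ford} together with the theorem of Artin--Wedderburn.'' Your converse direction is essentially the route the paper has in mind (Artin--Wedderburn decomposition, central simple algebras being separable over their centres, perfectness making each centre $Z_i$ a separable extension of $\Bbbk$, and transitivity of separability in the tower $\Bbbk\subseteq Z_i\subseteq B_i$), and your use of Proposition~\ref{prop16} together with the normalized-coproduct characterization to get closure of separability under finite products is a nice internal use of the paper's own machinery rather than an external citation. Your forward direction, however, is genuinely different in character: instead of invoking Ford's theorem you run the classical averaging argument, splitting any submodule $N\subseteq M$ by $q(m)=\sum_i x_i\,p(y_i m)$ built from the separability element; the verification of $A$-linearity via the identity $\sum_i ax_i\otimes y_i=\sum_i x_i\otimes y_i a$ and of $q|_N=\operatorname{Id}_N$ via $\sum_i x_iy_i=1$ is exactly right, and since finite dimensionality comes from the earlier proposition, the paper's definition of semisimple is met. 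This elementary argument also meshes well with the paper's viewpoint, since the separability element is precisely $\de(1)$ for a normalized nearly Frobenius coproduct. Your closing observation that perfectness cannot be dropped (a purely inseparable extension $L/\Bbbk$ is semisimple but $L\otimes_\Bbbk L$ is non-reduced, so $L$ is not separable) is a worthwhile addition that the paper does not record.
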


\begin{cor}
	Every normalized nearly Frobenius algebra is semisimple.
\end{cor}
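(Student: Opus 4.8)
The plan is to concatenate two facts that are already established earlier in the paper, so the argument is essentially bookkeeping. Suppose $(A,\de)$ is a normalized nearly Frobenius algebra. The first step is to pass from the coproduct to separability: by the theorem identifying normalized nearly Frobenius coproducts with separability elements, the element $e=\de(1)\in A\ot A$ satisfies $m(e)=1$ and $ae=ea$ for all $a\in A$, hence $e$ is a separability element and $A$ is a separable $\Bbbk$-algebra.

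The second step handles the finiteness clause hidden in the definition of semisimplicity used here: since $A$ is separable over a field, it is finite dimensional, which is exactly one of the propositions recalled in the section on separable algebras. This guarantees the ``finite dimensional'' hypothesis of the definition of semisimple algebra is met, so nothing further needs to be checked on that account.

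The third and final step invokes the proposition asserting that a separable algebra over a field $\Bbbk$ is semisimple — deduced there from the theorem of Artin--Wedderburn together with Theorem 4.5.7 of \cite{Ford} — to conclude that every left $A$-module is projective, i.e.\ that $A$ is semisimple. Chaining the three steps, a normalized nearly Frobenius algebra is separable, separable algebras over a field are finite dimensional, and separable implies semisimple; this yields the corollary.

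I do not expect a genuine obstacle: the statement is a formal consequence of results proved above, and the only point deserving a sentence of care is to make sure the notion of ``semisimple'' appearing in the target statement (finite dimensional, every left module projective) is precisely the one delivered by the separable-implies-semisimple proposition, so that no additional argument about the base field is required. One may optionally add that over a perfect field the converse of that proposition also holds, so in that case the three properties ``normalized nearly Frobenius'', ``separable'', and ``semisimple'' coincide.
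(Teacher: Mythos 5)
Your argument is correct and is exactly the chain the paper intends: the theorem identifying normalized nearly Frobenius coproducts with separability elements gives separability, and the proposition that separable algebras over a field are semisimple (with the finite-dimensionality supplied by the earlier proposition on separable algebras) finishes the proof. Your extra sentence checking that the definition of semisimple used here is the one delivered by that proposition is a reasonable bit of care but introduces nothing beyond the paper's route.
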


\begin{rem}
	Note that we prove, in Corollary 9, that if $\Bbbk$ is algebraically closed then every semisimple algebra over $\Bbbk$ is nearly Frobenius.
Now, using Proposition 25, we have that every semisimple algebra is separable if  $\Bbbk$  is perfect. Then, the result of Corollary 9 can be refined in the following way. If $\Bbbk$ is a perfect field and $A$ is a $\Bbbk$-algebra, then $A$ is a semisimple algebra if and only if $A$ is a normalized nearly Frobenius algebra. 	 
\end{rem}

\subsubsection{Applications}

The following results are known, but this paper presents another way of proving them using the previously determined Frobenius structures.

\paragraph{Matrix algebra:}

If we consider the matrix algebra $M_n(\k)$, one nearly Frobenius coproduct of this algebra is $$\de(I)=\frac{1}{n}\sum_{i,k=1}^nE_{ik}\ot E_{ki}$$ for this coproduct we have $\bigl(m\circ\de\bigr)(I)=I$. Then  $M_n(\k)$ is separable, in particular, if $\Bbbk$ is an algebraically closed field, then $M_n(\k)$ is semisimple.

\paragraph{Group algebra:}

In $A=\k G$, where $G$ is a cyclic finite group, we can define the nearly Frobenius coproduct $$\de(1)=\frac{1}{|G|}\sum_{k=1}^ng^k\ot g^{n-k}$$
Note that
$$\bigl(m\circ\de\bigr)(1)=\frac{1}{|G|}\sum_{k=1}^ng^k g^{n-k}=\frac{1}{|G|}\sum_{k=1}^ng^n=1$$
Then $A=\k G$ is separable, and semisimple if $\Bbbk$ is an algebraically closed field.

\begin{example}
	Retaking the example 1 of Examples \ref{e1}, using the Proposition \ref{prop16}, we can see that $B=A_1\times A_2$  admits a normalized coproduct then it is separable. Remember that $A_1=\Bbbk G$ and $A_2=\Bbbk H$, where $G$ and $H$ are cyclic groups of order $2$ and $3$ respectively. 
\end{example}

\paragraph{Truncated polynomial algebra:}

Let $A=\frac{\k[x]}{\langle x^{n+1} \rangle}$ be a nearly Frobenius algebra where a basis of the Frobenius space is $$\ma{E}=\bigl\{\de_0,\de_1,\dots,\de_{n}\bigr\}$$
where
$$\de_k\bigl(1\bigr)=\sum_{i+j=n+k}x^i\ot x^j,\quad\mbox{for}\; k\in\{0,\dots,n\}.$$
Then, $\displaystyle{\de=\sum_{k=0}^n a_k\de_k}$, where $a_k\in\k$ for all $k\in\{0,\dots,n\}$, is a general nearly Frobenius coproduct.

It is easy to prove that there is not a normalized copoduct: $$m\circ\de(1)=\sum_{k=0}^n a_k\sum_{i+j=n+k}x^ix^j=\sum_{k=0}^n a_k\sum_{i+j=n+k}x^{n+k}=a_0\sum_{i+j=n}x^{n}=(n+1)a_0x^n\neq 1.$$ 

Then $A=\frac{\k[x]}{\langle x^{n+1} \rangle }$ is not separable, for all $n\geq 1$.

\paragraph{Path algebra:}

Finally we consider the path algebra generated by the quiver
$$ Q: \xymatrix{\underset{1}{\bullet}\ar[r]^\alpha  &  \underset{2}{\bullet}\ar[r]^\beta & \underset{3}{\bullet}}$$
$A=\k Q=\bigl\langle e_1,e_2,e_3,\al,\be,\al\be\bigr\rangle$. $\operatorname{Frobdim}(A)=1$ and the nearly Frobenius coproduct is
$$\de(e_1)=\al\be\ot e_1,\quad \de(e_2)=\be\ot\al,\quad\de(e_3)=e_3\ot\al\be,$$
$$\de(\al)=\al\be\ot\al,\quad\de(\be)=\be\ot\al\be,\quad\de(\al\be)=\al\be\ot\al\be$$
Note that $m\circ\de(e_1)=\al\be e_1=0$, then is not normalized. Therefore $A=\k Q$ is not separable.

\subsection{Bimodule category on normalized nearly Frobenius algebras}

In this section we study the relationship between the normalized nearly Frobenius structure on an algebra and its category of bimodules.

First we give a technical result that will be used later.

\begin{lem}\label{lemma1}
	An object $M\in {}_A\ma{M}_{A}$ is projective if and only if the structure morphism $\chi:A\ot M\ot A\rt M$ splits in $_A\ma{M}_A$.
\end{lem}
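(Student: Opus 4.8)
The plan is to exploit the standard fact that $A \otimes M \otimes A$ is a free — hence projective — object in ${}_A\mathcal{M}_A$ whenever $M$ is any $\Bbbk$-vector space, together with the observation that the structure (action) morphism $\chi : A \otimes M \otimes A \to M$, $\chi(a \otimes m \otimes b) = amb$, is always surjective in ${}_A\mathcal{M}_A$ because $\chi(1 \otimes m \otimes 1) = m$. So we have a canonical surjection from a projective bimodule onto $M$, and the question reduces to when this particular surjection splits.

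For the forward direction, suppose $M$ is projective. Since $\chi : A \otimes M \otimes A \twoheadrightarrow M$ is an epimorphism of $A$-bimodules, projectivity of $M$ applied to $\mathrm{Id}_M : M \to M$ yields a bimodule section $s : M \to A \otimes M \otimes A$ with $\chi \circ s = \mathrm{Id}_M$; that is exactly the statement that $\chi$ splits in ${}_A\mathcal{M}_A$. Conversely, suppose $\chi$ splits, say $\chi \circ s = \mathrm{Id}_M$ with $s$ a bimodule map. Then $M$ is a direct summand (as a bimodule) of $A \otimes M \otimes A$ via the idempotent $s \circ \chi$. It remains to note that $A \otimes M \otimes A$ is projective in ${}_A\mathcal{M}_A$: it is isomorphic to the free $A^e$-module $A^e \otimes_{\Bbbk} M$ (using $A^e = A \otimes A^{\op}$ and identifying ${}_A\mathcal{M}_A$ with $A^e$-modules), which is projective, and a direct summand of a projective object is projective. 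Hence $M$ is projective.

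I expect the only mildly delicate point to be the bookkeeping identifying $A \otimes M \otimes A$ with a free $A^e$-module and checking that $\chi$ is genuinely an $A$-bimodule morphism (left action on the first tensor factor, right action on the last) and that the candidate splitting $s$ produced by projectivity really is a bimodule map — but both are immediate from the definitions, so there is no real obstacle. One could alternatively phrase the whole argument purely in ${}_A\mathcal{M}_A$ without invoking $A^e$: declare $A \otimes M \otimes A$ projective because $\mathrm{Hom}_{{}_A\mathcal{M}_A}(A \otimes M \otimes A, -) \cong \mathrm{Hom}_{\Bbbk}(M, -)$ is exact, then run the same summand argument. I would present the $A^e$-module version since the surrounding text already works with $A^e$-modules and Hochschild cohomology.
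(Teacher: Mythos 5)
Your proof is correct, and the forward direction coincides with the paper's (surjectivity of $\chi$ via the unit, then lift $\operatorname{Id}_M$ through $\chi$). For the converse, however, you take a genuinely different route: you identify $A\ot M\ot A$ with the free $A^e$-module $A^e\otimes_\k M$, note that the splitting exhibits $M$ as a bimodule direct summand of this projective object, and quote the fact that a summand of a projective is projective. The paper instead verifies the lifting property directly: given a bimodule epimorphism $g:P\rt Q$ and a map $f:M\rt Q$, it chooses a merely $\k$-linear section $\al$ of $g$ (which exists since we are over a field) and defines the lift as the composition $h=\chi_P\circ(1\ot\al\ot 1)\circ(1\ot f\ot 1)\circ\nu$, where $\nu$ is the given splitting of $\chi$; the point is that $1\ot\al\ot 1$ is automatically a bimodule map for the outer actions, so $h$ is a bimodule map, and $g\circ h=f$ by a short diagram chase. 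Both arguments rest on the same underlying mechanism (the free--forgetful adjunction, i.e.\ the existence of linear sections), but yours is shorter and more conceptual, importing standard facts about $A^e$-modules that the surrounding text indeed uses elsewhere, while the paper's is self-contained and explicit, never needing to introduce freeness of $A\ot M\ot A$ or the summand-of-projective lemma. Either version is acceptable; if you use yours, just make the isomorphism $A\ot M\ot A\cong A^e\otimes_\k M$ (outer actions on the left side) explicit, as you anticipate.
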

\begin{proof}
	The direct of the assertion is a consequence of the fact that the algebra has unit.
	
	Now we suppose that $\chi:A\ot M\ot A\rt M$ splits in $_A\ma{M}_A$, then exists $\nu:M\rt A\ot M\ot A$ such that $\chi\circ\nu=\operatorname{Id}_M$ in $_A\ma{M}_A$.\\
	As $\nu$ is a homomorphism in $_A\ma{M}_A$ the next diagram commutes
	$$\xymatrix@R=2pc@C=2.5pc{A\ot M\ot A\ar[r]^{\chi}\ar[d]_{1\ot\nu\ot 1}&M\ar[d]^{\nu}\\A\ot A\ot M\ot A\ot A\ar[r]_(.6){m\ot 1\ot m}&A\ot M\ot A}$$
	Let be $P,\, Q\in {}_A\ma{M}_{A}$, $f:M\rt Q$ homomorphism and $g:P\rt Q$ epimorphism in ${}_A\ma{M}_{A}$.
	$$\xymatrix{&M\ar[d]^f\\P\ar[r]^g&Q\ar[r]&0}$$
	If we only consider  the linear structure in the previous diagram we can affirm that there exist a linear map $\al:Q\rt P$ such that $g\circ\al=\operatorname{Id}_Q$. Using this map we define the map $h:M\rt P$ as the composition
	$$\xymatrix{M\ar[r]^(.3)\nu&A\ot M\ot A\ar[r]^{1\ot f\ot 1 }&A\ot Q\ot A\ar[r]^{1\ot\al\ot 1}&A\ot P\ot A\ar[r]^(.7){\chi_P}&P}$$
	First we prove that $g\circ h=f$:
	$$\xymatrix{
		M\ar[r]^(.3)\nu\ar@{=}[rd]& A\ot M\ot A\ar[r]^{1\ot f\ot 1}\ar[d]_{\chi}&A\ot Q\ot A\ar[r]^{1\ot\al\ot 1}\ar@{=}[rd]&A\ot P\ot  A\ar[r]^(.7){\chi_P}\ar[d]^{1\ot g\ot 1}&P\ar[d]^g\\
		&M\ar@/_2pc/[rrr]^f&&A\ot Q\ot A\ar[r]^(.7){\chi_Q}&Q
	}$$
	The last step is to prove that $h$ is a homomorphism in ${}_A\ma{M}_{A}$, i.e. the diagram
	$$\xymatrix{
		A\ot M\ot A\ar[r]^(.6){\chi}\ar[d]_{1\ot h\ot 1}&M\ar[d]^h\\
		A\ot P\ot  A\ar[r]_(.6){\chi_P}& P
	}$$
	commutes.\\
	Note that $h$ is a composition of homomorphisms in  ${}_A\ma{M}_{A}$.
	$$h=\chi_P\circ(1\otimes\al\otimes 1)\circ(1\otimes f\otimes 1)\circ\nu.$$

\end{proof}

Let be $A$ a $\k$-algebra and $m:A\ot A\rt A$ the product of this algebra. Then $\bigl(A, {}_Am_{A}\bigr)\in {}_A\ma{M}_{A}$
where ${}_Am_{A}:A\ot A\ot A\rt A$ is $m(m\ot 1)=m(1\ot m)$.\\

The following theorem is the central result of this section that allows to relate the normalized nearly Frobenius algebras with their bimodules.

\begin{thm}\label{theorem2}
	The object $\bigl(A, {}_Am_{A}\bigr)\in {}_A\ma{M}_{A}$ is projective if and only if $A$ admits a normalized nearly Frobenius coproduct.
\end{thm}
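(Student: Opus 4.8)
The plan is to prove each implication separately, using Lemma \ref{lemma1} as the main bridge between projectivity of $A$ as a bimodule and the splitting of the structure morphism.

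For the forward direction, suppose $(A, {}_Am_A)$ is projective in ${}_A\ma{M}_A$. By Lemma \ref{lemma1} the structure morphism $\chi = {}_Am_A : A\ot A\ot A \rt A$ (which is just $m(m\ot 1)=m(1\ot m)$) splits, so there is a bimodule homomorphism $\nu : A \rt A\ot A\ot A$ with $\chi\circ\nu = \operatorname{Id}_A$. From $\nu$ I would manufacture a candidate normalized coproduct by composing with the multiplication on the outer two tensor factors: set $\de := (m\ot 1)\circ\nu = (1\ot m)\circ\nu$ — more precisely, $\de = (1\ot m)\circ\nu : A\rt A\ot A$, collapsing the last two slots of $A\ot A\ot A$. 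The claim to check is that $\de$ is a nearly Frobenius coproduct (a bimodule morphism $A\rt A\ot A$) and that $m\circ\de = \operatorname{Id}_A$. The latter is immediate since $m\circ\de = m\circ(1\ot m)\circ\nu = m(m\ot 1)\circ\nu = \chi\circ\nu = \operatorname{Id}_A$. The bimodule-morphism property of $\de$ should follow from the bimodule-morphism property of $\nu$ together with associativity of $m$; one writes out that $\nu$ intertwines the $A$-bimodule structure on $A$ with the outer $A\ot(-)\ot A$ structure on $A\ot A\ot A$, and then collapsing an interior pair of factors by $m$ (which is itself a bimodule map in the appropriate slots) preserves the intertwining property relating $A$ to $A\ot A$ with its standard bimodule structure. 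One must be a little careful about which copy of $A$ absorbs the action, so it may be cleanest to use $\nu(1) = \sum x_i\ot u_i\ot y_i \in A\ot A\ot A$: then $\chi\circ\nu(1)=\sum x_i u_i y_i = 1$, the bimodule condition on $\nu$ forces $\sum a x_i\ot u_i\ot y_i = \sum x_i\ot u_i\ot y_i a$ for all $a$, and defining $e := \de(1) = \sum x_i\ot u_i y_i$ gives $m(e)=1$ and $ae = ea$, i.e. $e$ is a separability element; invoking Theorem 21 (normalized nearly Frobenius $\Leftrightarrow$ separable) then finishes this direction.

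For the converse, suppose $A$ admits a normalized nearly Frobenius coproduct $\de$, so $e := \de(1)$ satisfies $ae=ea$ and $m(e)=1$ (equivalently, by Theorem 21, $A$ is separable). I would construct an explicit bimodule section $\nu : A\rt A\ot A\ot A$ of $\chi$. Writing $e = \sum x_i\ot y_i$, define $\nu(a) = \sum x_i\ot a\ot y_i$ — or, to make it manifestly a bimodule map and to cover a general element, $\nu := (1\ot\operatorname{Id}_A\ot 1)\circ(\text{insert }e\text{ around }a)$, i.e. $\nu(a)=\sum a x_i \ot ? \ot y_i$; the cleanest formula is $\nu(a) = \sum x_i \ot y_i' a x_i' \ot y_i$ if one splits $e$ further, but in fact the simple choice $\nu(a)=\sum x_i\ot a\ot y_i$ already works because $ae=ea$ lets one move $a$ through: for $b\in A$, $\nu(ba)=\sum x_i\ot ba\ot y_i$ and $b\cdot\nu(a) = \sum bx_i\ot a\ot y_i = \sum x_i\ot a\ot y_i b$ needs the idempotent-type identity, so one verifies left-$A$-linearity using $bx_i\ot\cdots$ versus $x_i\ot b\cdots$ via $be=eb$ applied in the outer factors — this is exactly the standard argument that a separability idempotent yields a bimodule splitting of multiplication, lifted one tensor degree. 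Then $\chi\circ\nu(a) = \sum x_i a y_i = \sum x_i y_i a = m(e)a = a$ using $ae=ea$ and $m(e)=1$, so $\chi\circ\nu=\operatorname{Id}_A$. By Lemma \ref{lemma1}, $(A,{}_Am_A)$ is projective in ${}_A\ma{M}_A$.

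The main obstacle I anticipate is purely bookkeeping: keeping straight which of the three tensor slots of $A\ot A\ot A$ carries the left action, which carries the right, and which is the "free" middle slot, and confirming that collapsing the correct adjacent pair by $m$ respects the $A$-bimodule structures on both source and target. The conceptual content is entirely contained in Lemma \ref{lemma1} and Theorem 21; once the dictionary between a separability element $e\in A\ot A$, a normalized coproduct $\de$ with $\de(1)=e$, and a bimodule section $\nu$ of $\chi$ (namely $\nu(a)=\sum x_i\ot a\ot y_i$ for $e=\sum x_i\ot y_i$) is set up, both implications are short diagram chases. It may also be worth remarking explicitly that this theorem, combined with Theorem 21 and Theorem 22, shows that $(A,{}_Am_A)$ being projective as an $A^e$-module coincides with $A$ having Hochschild cohomological dimension $0$, recovering the standard characterization of separability — consistent with the earlier Theorem listing conditions (1)–(3).
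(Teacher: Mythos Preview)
Your forward direction is fine and essentially matches the paper: from a bimodule section $\nu$ of ${}_Am_A$ you set $\de=(1\ot m)\circ\nu$, check $m\circ\de=\operatorname{Id}_A$, and observe that $e=\de(1)$ satisfies $ae=ea$ and $m(e)=1$; whether you then invoke the earlier equivalence with separability or verify the bimodule property of $\de$ by a direct diagram chase (as the paper does) is immaterial.

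The converse, however, has a genuine gap. The map $\nu(a)=\sum x_i\ot a\ot y_i$ (with $e=\de(1)=\sum x_i\ot y_i$) is \emph{neither} a section of $\chi={}_Am_A$ \emph{nor} an $A$-bimodule homomorphism in general. For the first point, $\chi\nu(a)=\sum x_iay_i$, and the identity $ae=ea$ in $A\ot A$ does \emph{not} imply $\sum x_iay_i=a$; for instance in $A=M_2(\k)$ with $e=\tfrac12\sum_{i,j}E_{ij}\ot E_{ji}$ one computes $\sum_{i,j}\tfrac12 E_{ij}E_{11}E_{ji}=\tfrac12 I\neq E_{11}$. For the second point, left $A$-linearity would require $\sum bx_i\ot a\ot y_i=\sum x_i\ot ba\ot y_i$, and $be=eb$ only gives $\sum bx_i\ot y_i=\sum x_i\ot y_ib$, which moves $b$ to the \emph{right} tensor factor, not into the middle slot. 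The correct section is the one you briefly allude to and then discard: one must ``split $e$ twice''. Concretely the paper takes
\[
\alpha=(\de\ot 1)\circ\de=(1\ot\de)\circ\de,
\]
so that $\alpha(1)=\sum_{i,j} x_i\ot y_ix_j\ot y_j$. With this choice the normalized condition $m\circ\de=\operatorname{Id}_A$ gives ${}_Am_A\circ\alpha=m\circ(1\ot m)(1\ot\de)\de=m\circ\de=\operatorname{Id}_A$, and the nearly Frobenius (bimodule) property of $\de$ makes $\alpha$ an $A$-bimodule map via a straightforward diagram chase. Your single-copy map $\nu(a)=\sum x_i\ot a\ot y_i$ is the section one would write down for the \emph{multiplication} $m:A\ot A\to A$ viewed as a map of (say) left $A$-modules, not for the three-fold structure map in ${}_A\ma M_A$.
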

\begin{proof}
	If $\bigl(A, {}_Am_{A}\bigr)$ is projective bimodule, then, by Lemma \ref{lemma1}, there exists $\al: A\rt A\ot A\ot A$ a homomorphism in ${}_A\ma{M}_{A}$ such that ${}_Am_{A}\circ\al=\operatorname{Id}_A$.
	
	We define $\de:A\rt A\ot A$ as $\de=(1\ot m)\circ\al$.\\
	First note that the normalized condition is immediate: $$\displaystyle{m\circ\de=m\circ(1\ot m)\circ\al={}_Am_{A}\circ\al=\operatorname{Id}_A}.$$
	To prove that $\de$ is an $A$-bimodule homomorphism we need to check that $\bigl(m\ot 1 \bigr)\bigl(1\ot\de\bigr)=\de\circ m=\bigl(1\ot m\bigr)\bigl(\de\ot 1\bigr)$.
	
	Remember that $\al$ is a homomorphism in ${}_A\ma{M}_{A}$, then the next diagram commutes
	$$\xymatrix{A\ot A\ar[r]^m\ar[d]_{1\ot \al}&A\ar[d]^{\al}\\A\ot A\ot A\ot A\ot A\ar[r]_(.6){m\ot 1\ot 1}&A\ot A\ot A}$$
	This implies that
	$$\xymatrix{A\ot A\ar[r]^(.4){1\ot\al}\ar[d]_m& A\ot A\ot A\ot A\ar[r]^(.55){1\ot 1\ot m}\ar[d]^(.6){m\ot 1\ot 1}&A\ot A\ot A\ar [d]^{m\ot 1}\\
		A\ar[r]_(.3){\al}&A\ot A\ot A\ar[r]_{1\ot m}&A\ot A}$$  commutes.  Then $(m\ot 1)\circ(1\ot\de)=\de\circ m$.
	
	Applying the associativity of the product and the fact that $\al$ is a homomorphism in ${}_A\ma{M}_{A}$, in particular in $\ma{M}_A$, we have that the next diagram commutes
	$$\xymatrix{A\ot A\ar[r]^(.4){\al\ot 1}\ar[d]_m& A\ot A\ot A\ot A\ar[r]^(.55){1\ot m\ot 1}\ar[d]^(.6){1\ot 1\ot m}&A\ot A\ot A\ar [d]^{1\ot m}\\
		A\ar[r]_(.3){\al}&A\ot A\ot A\ar[r]_{1\ot m}&A\ot A}$$ Then $(1\ot m)\circ(\de\ot 1)=\de\circ m$.
	
	Reciprocally, let be $\de:A\rt A\ot A$ a normalized nearly Frobenius coproduct. Applying the Lemma \ref{lemma1} we need to prove that $_Am_A$ split, i.e. there exists a morphism $\, \al:A\rt A\ot A\ot A$ in ${}_A\ma{M}_A$ such that ${}_Am_A\circ\al=\operatorname{Id}_A$.\\
	We define $\al=(\de\ot 1)\circ\de=(1\ot\de)\circ\de$. Note that
	$${}_Am_A\circ\al=m\circ(1\ot m)(1\ot\de)\circ\de=m\circ(1\ot 1)\circ\de=m\circ\de=\operatorname{Id}_A.$$
	To finish we need to prove that $\al$ is a homomorphism in ${}_A\ma{M}_{A}$, i.e. the next diagram commutes
	$$\xymatrix{
		A\ot A\ot A\ar[r]^{{}_Am_A}\ar[d]_{1\ot\al \ot 1}&A\ar[d]^\al\\
		A\ot A\ot A\ot A\ot A\ar[r]_(.6){m\ot 1\ot m}&A\ot A\ot A
	}$$
	$$\xymatrix@C=3pc{
		A\ot A\ot A\ar[r]^{1\ot\de\ot 1}\ar[d]_{m\ot 1}&A\ot A\ot A\ot A\ar[d]_{m\ot 1\ot 1}\ar[r]^{1\ot\de\ot 1\ot 1}&A\ot A\ot A\ot A\ot A\ar[d]^{m\ot 1\ot 1}\\
		A\ot A\ar[r]^{\de\ot 1}\ar[d]_m&A\ot  A\ot A\ar[r]^{\de\ot 1\ot 1}\ar[d]_{1\ot m}&A\ot A\ot A\ot A\ar[d]^{1\ot 1\ot m}\\
		A\ar[r]_{\de}&A\ot A\ar[r]_{\de\ot 1} &A\ot A\ot A
	}$$
	The internal diagrams commute by the nearly Frobenius property of the coproduct $\de$. Then the external diagram commutes.
\end{proof}

\begin{cor}
	Let be $A$ a $\k$-algebra. Then, the next conditions are equivalent
	\begin{enumerate}
		\item[$(1)$] $A$ admits a normalized nearly Frobenius algebra.
		\item[$(2)$] $\bigl(A,{}_Am_A\bigr)$ is projective in ${}_A\ma{M}_A$.
		\item[$(3)$] Every $\bigl(M,\rho_M\bigr)\in {}_A\ma{M}_A$ is projective.
		\item[$(4)$] The category ${}_A\ma{M}_A$ is semisimple (every $\bigl(M,\rho_M\bigr)\in {}_A\ma{M}_A$ is semisimple).
	\end{enumerate}
\end{cor}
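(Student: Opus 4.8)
The plan is to prove the four conditions equivalent by establishing $(1)\Leftrightarrow(2)$, $(2)\Rightarrow(3)$, $(3)\Rightarrow(2)$, and $(3)\Leftrightarrow(4)$; together these close the cycle. The equivalence $(1)\Leftrightarrow(2)$ is exactly Theorem \ref{theorem2}, so nothing new is needed there, and $(3)\Rightarrow(2)$ is immediate because $\bigl(A,{}_Am_A\bigr)$ is itself an object of ${}_A\ma{M}_A$.

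The core step is $(2)\Rightarrow(3)$. Assuming $(2)$, Theorem \ref{theorem2} provides a normalized nearly Frobenius coproduct $\de$; write $\de(1)=\sum_i x_i\ot y_i$, so that $\sum_i x_iy_i=1$ and $a\,\de(1)=\de(1)\,a$ for every $a\in A$ (the invariance identity, noted earlier via $\Phi\colon\ma{E}_A\rt(A\ot A)^A$). Let $(M,\rho_M)\in{}_A\ma{M}_A$ be arbitrary with structure morphism $\chi\colon A\ot M\ot A\rt M$, $\chi(a\ot m\ot b)=amb$. I would define $\nu\colon M\rt A\ot M\ot A$ by
$$\nu(m)=\sum_{i,j}x_i\ot\bigl(y_i\,m\,x_j\bigr)\ot y_j,$$
using two independent copies of $\de(1)$. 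Then $\chi\bigl(\nu(m)\bigr)=\bigl(\sum_i x_iy_i\bigr)m\bigl(\sum_j x_jy_j\bigr)=m$, so $\nu$ is a section of $\chi$; and one checks that $\nu$ is a morphism in ${}_A\ma{M}_A$ by transporting the left and right scalars through $\nu(amb)$ one at a time, applying the invariance identity $a\,\de(1)=\de(1)\,a$ to the appropriate copy at each step. By Lemma \ref{lemma1}, $M$ is projective; since $M$ was arbitrary, $(3)$ follows.

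For $(3)\Leftrightarrow(4)$ I would pass to the enveloping algebra: ${}_A\ma{M}_A$ is equivalent to the category of left $A^e$-modules, where $A^e=A\ot A^{\op}$. Condition $(3)$ says every $A^e$-module is projective, which by the classical Wedderburn characterization is equivalent to $A^e$ being a semisimple ring, hence to every $A^e$-module being a direct sum of simple modules, i.e.\ to ${}_A\ma{M}_A$ being semisimple; this gives $(3)\Leftrightarrow(4)$. Alternatively one argues directly inside ${}_A\ma{M}_A$: if every object is projective then every short exact sequence splits, so every subobject is a direct summand, which is precisely semisimplicity of the category; conversely, in a semisimple category every epimorphism splits, so every object is projective.

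The only step involving computation is the verification in $(2)\Rightarrow(3)$ that $\nu$ respects the bimodule structure; this is short but needs care in choosing which copy of $\de(1)$ carries the scalar being moved. All the remaining implications are formal, resting on Lemma \ref{lemma1}, Theorem \ref{theorem2}, and standard facts about semisimple module categories.
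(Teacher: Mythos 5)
Your proposal is correct and follows essentially the same route as the paper: $(1)\Leftrightarrow(2)$ by Theorem \ref{theorem2}, the trivial $(3)\Rightarrow(2)$, and for $(2)\Rightarrow(3)$ your section $\nu(m)=\sum_{i,j}x_i\ot y_i m x_j\ot y_j$ is exactly the paper's map $\al_M=(1\ot\rho_M\ot 1)\circ(\de\ot 1\ot\de)\circ(u\ot 1\ot u)$ written elementwise, verified with Lemma \ref{lemma1} and the same invariance/bimodule identity for $\de$. The step $(3)\Leftrightarrow(4)$ is the same standard fact the paper cites, for which you merely supply the routine argument.
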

\begin{proof}
	$(1)\Leftrightarrow (2)$ is Theorem \ref{theorem2}.\\
	$(3)\Rightarrow (2)$ It is immediate.\\
	$(2)\Rightarrow (3)$ To prove that $\bigl(M,\rho_M\bigr)$ is projective is equivalent, by the Lemma \ref{lemma1}, to prove that $\rho_M$ split, i.e. there exists $\al_M:M\rt A\ot M\ot A$ in ${}_A\ma{M}_A$ such that $\rho_M\circ\al_M=\operatorname{Id}_M$.\\
	As $\bigl(A,{}_Am_A\bigr)$ is projective, by the Theorem \ref{theorem2}, there exist $\de:A\rt A\ot A$ nearly Frobenius coproduct normalized. Then we define the map $\al_M$ as the composition
	$$\xymatrix@C=3pc{M\ar[r]^(.3){u\ot 1\ot u}&A\ot M\ot A\ar[r]^(.4){\de\ot 1\ot\de}&A\ot A\ot M\ot A\ot A\ar[r]^(.6){1\ot\rho_M\ot 1}&A\ot M \ot A}$$
	First we prove that $\rho_M\circ\al_M=\operatorname{Id}_M$:
	$$\xymatrix@C=3pc{M\ar[r]^(.3){u\ot 1\ot u}\ar@/_3pc/[drrr]_{\operatorname{Id}_M}&A\ot M\ot A\ar[r]^(.4){\de\ot 1\ot\de}\ar@{=}[dr]&A\ot A\ot M\ot A\ot A\ar[r]^(.6){1\ot\rho_M\ot 1}\ar[d]^{m\ot 1\ot m}&A\ot M \ot A\ar[d]^{\rho_M}\\
		&&A\ot M\ot A\ar[r]_{\rho_M}&M}$$
	Finally we need to prove that $\al_M$ is un homomorphism in ${}_A\ma{M}_A$ i.e. the next diagram commutes
	$$\xymatrix@C=3pc{
		A\ot M\ot A\ar[r]^{\rho_M}\ar[d]_{1\ot\al_M\ot 1} &M\ar[d]^{\al_M}\\ A\ot A\ot M\ot A\ot A\ar[r]_(.6){m\ot 1\ot m}& A\ot M\ot A
	}$$
	\scalebox{0.9}{$$\xymatrix@C=3pc{
		a\ot m\ot b\ar@{|->}[r]^{\rho_M}\ar@{|->}[dd]_{1\ot u\ot 1\ot u\ot 1}& amb\ar@{|->}[r]^(.4){u\ot 1\ot u}&1\ot amb\ot 1\ar@{|->}[d]^{\de\ot 1\ot\de}\\
		&&\sum\xi_1\ot \xi_2\ot amb\ot\xi_1\ot\xi_2\ar@{|->}[d]^{1\ot\rho_M\ot 1}\\
		a\ot 1\ot m\ot 1\ot b\ar@{|->}[d]_{1\ot\de\ot 1\ot\de\ot 1}&& \sum\xi_1\ot\xi_2 amb\xi_1\ot\xi_2\\
		\sum a\ot\xi_1 \ot \xi_2 \ot m\ot\xi_1\ot\xi_2\ot b\ar@{|->}[r]_(.55){1^2\ot\rho_M\ot 1^2}&\sum a \ot\xi_1\ot \xi_2m\xi_1\ot\xi_2\ot b\ar@{|->}[r]_(.55){m\ot 1\ot m}&\sum a\xi_1\ot\xi_2m\xi_1\ot\xi_2b\ar@{=}[u]
	}
	$$}

	The expressions $\sum a\xi_1\ot\xi_2m\xi_1\ot\xi_2b$ and $\sum\xi_1\ot\xi_2 amb\xi_1\ot\xi_2$ agree by the Frobenius condition of the coproduct $\de$.\\
	$(3)\Leftrightarrow (4)$ It is a classic result in representation theory (see, for example, \cite{assem}).
\end{proof}

\begin{rem}
If we look the examples of section 2.2, we can conclude that the categories of bimodules over $A=M_n(\k)$ and $B=\k G$ are semisimple, but the category of bimodules over $C=\frac{\k[x]}{\langle x^{n+1} \rangle}$ is not semisimple.
\end{rem}

\vspace{0.3cm}
	Finally, with everything developed we relate the studied with the category of modules on a nearly Frobenius algebra.
\begin{thm}
	If $\Bbbk$ is a perfect field we have the following sequence of equivalences:
	$${}_A\ma{M}\;\, \mbox{is semisimple}\Leftrightarrow\; A\;\,\mbox{is semisimple} \Leftrightarrow\; A\;\,\mbox{is separable} \Leftrightarrow {}_A\ma{M}_A\;\,\mbox{is semisimple}.$$
	
If $\Bbbk$ is not a perfect field we have the following sequence of implications:	
$${}_A\ma{M}_A\;\,\mbox{is semisimple} \Leftrightarrow\; A\;\,\mbox{is separable} \Rightarrow\; A\;\,\mbox{is semisimple}\Leftrightarrow\; {}_A\ma{M}\;\, \mbox{is semisimple}.$$
\end{thm}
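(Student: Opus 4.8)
The plan is to derive the theorem by splicing together results already in hand and then tracking which implications survive when $\k$ is allowed to be imperfect. The first link I would record is that ${}_A\ma{M}_A$ is semisimple if and only if $A$ is separable, with no hypothesis on the field: the Corollary following Theorem~\ref{theorem2} states that ${}_A\ma{M}_A$ is semisimple exactly when $A$ admits a normalized nearly Frobenius coproduct, and the theorem characterizing separability identifies that condition with $A$ being separable; composing the two equivalences gives the link.

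The second ingredient is the proposition comparing separability and semisimplicity: over any field, $A$ separable $\Rightarrow A$ semisimple, and over a perfect field the implication reverses. Note that along this branch $A$ is automatically finite dimensional, since separability forces $\dim_\k A<\infty$, so the finite-dimensionality built into the notion of semisimple algebra is not an issue. Finally I would use the classical equivalence between $A$ being semisimple and the category ${}_A\ma{M}$ being semisimple --- equivalently, every left $A$-module being projective, equivalently a direct sum of simple modules (see \cite{assem}) --- which holds without restriction on $\k$.

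With these three links the assembly is immediate. When $\k$ is perfect all three are biconditionals, and reading them as ${}_A\ma{M}\text{ ss}\Leftrightarrow A\text{ ss}\Leftrightarrow A\text{ separable}\Leftrightarrow {}_A\ma{M}_A\text{ ss}$ gives the first displayed chain of equivalences. When $\k$ is not perfect only one direction of the middle link survives, so the same reading collapses to ${}_A\ma{M}_A\text{ ss}\Leftrightarrow A\text{ separable}\Rightarrow A\text{ semisimple}\Leftrightarrow {}_A\ma{M}\text{ ss}$, which is precisely the second displayed chain.

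I do not anticipate a genuine obstacle: all the weight has already been carried by Theorem~\ref{theorem2}, its Corollary, the separability characterization, and the cited separable-versus-semisimple comparison, so the remaining task is bookkeeping. The only subtlety worth stating explicitly is that over an imperfect field the implication $A$ separable $\Rightarrow A$ semisimple is genuinely non-reversible --- a purely inseparable field extension such as $\mathbb{F}_p(t)[x]/(x^p-t)$ is a field, hence semisimple, but not a separable $\mathbb{F}_p(t)$-algebra --- which is exactly why the second chain must be stated with an implication rather than an equivalence in the middle.
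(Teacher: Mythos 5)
Your proposal is correct and coincides with the paper's own (implicit) argument: the theorem is stated there without a separate proof precisely because it is the assembly of the corollary to Theorem~\ref{theorem2} (${}_A\ma{M}_A$ semisimple $\Leftrightarrow$ normalized nearly Frobenius $\Leftrightarrow$ separable), the proposition that separable implies semisimple with equivalence over perfect fields, and the classical characterization of semisimple algebras via ${}_A\ma{M}$. Your explicit witness $\mathbb{F}_p(t)[x]/(x^p-t)$ for the failure of the converse over an imperfect field is a worthwhile addition consistent with the paper's one-directional claim.
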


\end{document}